\documentclass[11pt]{amsart}
\usepackage{amsmath,amssymb, amsthm,graphicx, url, verbatim, enumerate, relsize, float, amsfonts, amsbsy}
\usepackage{stackrel}
\usepackage[margin=1in]{geometry}
\usepackage{tikz}
\usepackage{color}
\theoremstyle{definition}
\newtheorem{defn}{Definition}[section]

\theoremstyle{plain}
\newtheorem{thm}[defn]{Theorem}

\newtheorem{lem}[defn]{Lemma} 
\newtheorem{cor}[defn]{Corollary}

\newcommand{\Sk}{\mathcal{S}}
\newcommand{\jwproj}{\vcenter{\hbox{\includegraphics[scale=.1]{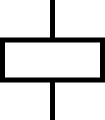}}}}

\newcommand{\sgn}{\text{sgn}}

\graphicspath{{Pics/}}

\begin{document}
\title{Stability properties of the colored Jones polynomial}
\author[C. Lee]{Christine Ruey Shan Lee}
\address[]{Department of Mathematics and Statistics, University of South Alabama, Mobile AL 36608}
\email[]{christine.rs.lee@gmail.com}
\thanks{{Lee was partially supported by NSF grant MSPRF-DMS 1502860.}}

\begin{abstract} It is known that the colored Jones polynomial of a $+$-adequate link has a well-defined tail consisting of stable  coefficients, and that the coefficients of the tail carry geometric and topological information on the $+$-adequate link complement. We show that a power series similar to the tail of the colored Jones polynomial for $+$-adequate links can be defined for \emph{all} links, and that it is trivial if and only if the link is non $+$-adequate.  \\ \ \\ 
\noindent \textbf{Keywords.} colored Jones polynomial, adequate link, stability, tail. 
\end{abstract}

\maketitle

\section{Introduction} 
For an oriented link $K\subset S^3$, the \emph{colored Jones polynomial} is a sequence of Laurent polynomials $\{J_K^n(q)\}_{n=2}^{\infty}$, where $J_K^n(q) \in \mathbb{Z}[q^{-1/2}, q^{1/2}]$ is an invariant of $K$ for each $n$. In particular, $J_K^2(q)$ is the ordinary Jones polynomial. Unlike the Alexander polynomial, which has a definition based on the topology of the link complement, the colored Jones polynomial originated from representation theory and the ideas of quantum physics, with a definition that makes no explicit reference to the topology of the link. Thus, understanding the geometric and topological information that the polynomial carries  has been a fundamental goal in the study of knots, 3-manifolds, and quantum invariants.

The colored Jones polynomial of a link can be defined and studied in terms of combinatorial properties of link diagrams. One approach to this is through the Kauffman bracket \cite{kauffman}. Studying the combinatorics of the Kauffman bracket, Lickorish and Thistlethwaite \cite{lick-thistle} showed that the extreme degrees of the Jones polynomial for a link are bounded by concrete data from any diagram. If the link is \emph{semi-adequate}, which means that it is \emph{$+$-adequate} or \emph{$-$-adequate}, see Definition \ref{defn:adequate-diagram} and ~\ref{defn:adequate}, then the bounds are sharp. The colored Jones polynomial of a semi-adequate link has since been studied considerably \cite{DL06, stoimenow:coeffs, Arm13, garoufalidisLe, AD13, garouvong1, garouvong, codyoliver}, and it has been shown to relate to the topology of essential surfaces and the geometry of the hyperbolic link complement \cite{Kas97, MM01, MM02, FGL02, q-holo, DL07, fkp:filling, garoufalidis, fkp:gutsjp, fkp:qsf}. 

The coefficients of the colored Jones polynomial exhibit a stability behavior for semi-adequate links. Let $d(n)$ denote the minimum degree of $J_K^n(q)$. For $i \geq 2$,  let $\beta_i$ be the coefficient of $q^{d(i) + (i-2)}$ of $J_K^i(q)$. Armond \cite{Arm13} and Garoufalidis and Le \cite{garoufalidisLe} have independently shown that for all $n \geq i$, the coefficient of $q^{d(n) + (i-2)}$ of $J_K^n(q)$ is equal to $\beta_i$ if $K$ is $+$-adequate. That is, the first $n-1$ coefficients of $J_K^n(q)$ from $d(n)$ are \emph{stable} for a link with a $+$-adequate diagram $D$, extending the results by Dasbach and Lin \cite{DL06} on the first, second, and third coefficient of the colored Jones polynomial of adequate links. For a $+$-adequate link $K$, Armond and Garoufalidis and Le define a power series, called the \emph{tail} of the colored Jones polynomial
\[ T_K(q) = \sum_{i=2}^{\infty} \beta_i q^i,\]  consisting of stable coefficients. These stable coefficients have been shown by the work of Futer, Kalfagianni, and Purcell to carry information on the geometric structure of the complement of a $+$-adequate knot, and to provide sharp volume bounds on the complement of hyperbolic adequate links. See \cite{fkp:gutsjp} for the results and a detailed survey.

Rozansky has shown that stability behavior also occurs in the categorification of the colored Jones polynomial \cite{FKS06, CK12, Roz10}, where a bi-graded chain complex
$C^{Kh}(K, n)$ is assigned to a link $K$, such that the graded Euler characteristic of the homology groups $H^{Kh}(K,n)$ is the \emph{$n$th colored Jones polynomial} $J_K^n(q)$. In \cite{Roz12}, he shows that for $K$ a $+$-adequate link, one can define a family of maps 
\[ f_n: \widetilde{H}^{Kh}(K, n) \rightarrow \widetilde{H}^{Kh}(K, n+1), \] 
such that  $f_n$ is an isomorphism on $\widetilde{H}_{i, *}^{Kh}$ for $i \leq n-1$, where $i$ is the homological grading. The tilde indicates the appropriate degree shift on the chain complex $C^{Kh}(K, n)$, so that $f_n$ is a degree-preserving map for each $n$. In other words, for all $n \geq i$, homology groups $\widetilde{H}^{Kh}(K, n)$  of homological grading less than $i-1$ are isomorphic to the homology groups $\widetilde{H}^{Kh}(K, i)$ of homological grading less than $i-1$. As a result, he defines a \emph{tail homology} of the colored Jones polynomial which is the direct limit of the direct system of $\{ \widetilde{H}^{Kh}(K,n) \}_{n=2}^{\infty}$ determined by $\{f_n\}$. The tail homology contains the \emph{stable} homology groups of $C^{Kh}(K, n)$ as $n$ increases. Therefore, we have a version of the results by Armond \cite{Arm13} and Garoufalidis and Le \cite{garoufalidisLe} for the categorification of the colored Jones polynomial.

It is natural to ask whether similar results can be obtained outside the class of semi-adequate links. In particular, we consider $h_n(D)$, a lower bound for $d(n)$ from any diagram $D$ of the link, see \eqref{eq:lowerbound}. For a $+$-adequate link, we have that $d(n) = h_n(D)$, where $D$ is a $+$-adequate diagram \cite{lick-thistle, fkp:slope}. See Corollary \ref{cor:arealize} for a proof of this result. If a link does not admit a $+$-adequate diagram, we may ask how $d(n)$ behaves with respect to $h_n(D)$. In this paper, we study the effect that a diagram $D$ being \emph{non} $+$-adequate has on the difference $d(n) - h_n(D)$. We work with the unreduced version of the polynomial with the normalization convention as specified by Definition \ref{defn:cjp}. Kalfagianni and the author have shown that $d(n) > h_n(D)$ for $n >2$ if $D$ is not $+$-adequate \cite{kalee1}. Therefore, the colored Jones polynomial can characterize $+$-adequate links. Our main result, which extends that of \cite{kalee1}, is the following.
 
\begin{thm} \label{thm:tail} Let $D$ be a diagram  of a link $K\subset S^3$ and for $n\geq 2$, let $d(n)$ be the minimum degree in $q$ of the $n$th colored Jones polynomial $J_K^n(q)$, with $h_n(D)$ the lower bound of $d(n)$ from $D$ as defined by \eqref{eq:lowerbound}. If $D$ is not $+$-adequate, then $d(n) \geq h_n(D) + n-2$ for $n > 2$.
\end{thm} 

If $K$ is not $+$-adequate, then any diagram $D$ of $K$ is not $+$-adequate. Let $h_n$ be the maximum of $h_n(D)$ taken over all diagrams $D$ of $K$. This is a link invariant and by Theorem ~\ref{thm:tail}, $d(n) \geq h_n + n-2$ for $n>2$. We use this to obtain a link invariant in the form of a power series $J_K^+(q)$, which vanishes if $K$ is not $+$-adequate as in \cite{kalee1}. This allows us to extend the construction of a tail of a $+$-adequate link to all links. See the discussion in Section ~\ref{sec:invariants}. We remark that while $J_K^+(q)$ coincides with $T_K(q)$ when $K$ is adequate, it is not the same as the ``tail" considered in \cite{DL06, Arm13}, since they consider coefficients from the actual minimum degree $d(n)$ of the polynomial. 

In terms of the characterization of semi-adequacy by the colored Jones polynomial, note that Manchon \cite{manchon} has constructed an infinite family of non $+$-adequate knots with diagrams $D$ for which $d(2) = h_2(D)$. An example of such a knot is 12n706, see \emph{KnotInfo} \cite{knotinfo} for a diagram of the knot where $d(2) = h_2(D) = -4$. However, this knot is not $+$-adequate since the first coefficient of its Jones polynomial is 2, and it is known that for a $+$-adequate link, the first coefficient of its Jones polynomial is $\pm 1$ \cite{DL06}. Manchon's examples show that  the degree of the Jones polynomial is not enough to characterize links which are not $+$-adequate, while Kalfagianni \cite{Kal16} has obtained a characterization of adequate links by the colored Jones polynomial using Theorem \ref{thm:tail}. 

A conjecture made by Rozansky \cite[Conjecture 2.13]{Roz12} stating that the tail homology he has constructed vanishes for non $+$-adequate links has now been proven \cite{Lee17}. However, its direct implication for the colored Jones polynomial is weaker than Theorem \ref{thm:tail}, as it states that $d(n) - h_n(D) \geq f(n)$ where $f(n) = an+b$ with constants $a<1$, $b$.

Since a diagram is $-$-adequate if its mirror image is $+$-adequate, for the rest of the paper we will only deal with $+$-adequacy. For the results discussed for a $+$-adequate link, analogous statements for the maximum degree of the colored Jones polynomial of a $-$-adequate link may be obtained by taking the mirror image $\overline{D}$ of a $+$-adequate diagram $D$, using the fact that $J_{\overline{D}}^n(q) = J_{D}^n(q^{-1})$.

\section{Preliminaries} \label{sec:prelim}

\subsection{Skein theory and the colored Jones polynomial} \label{subsec:skein}
We follow the approach of \cite{Lic97}. Let $F$ be an orientable surface which has a finite (possibly empty) collection of points $P$ specified on $\partial F$ if $\partial F \not= \emptyset$. A link diagram on $F$ consists of finitely many arcs and closed curves on $F$ such that 
\begin{itemize}
\item There are finitely many transverse crossings with an over-strand and an under-strand. 
\item The endpoints of the arcs lie in $P$. 
\end{itemize} 
Two link diagrams on $F$ are isotopic if they differ by a homeomorphism of $F$ isotopic to the identity. The isotopy is required to fix $\partial F$. 

\begin{defn}\label{defn:skein} Let $A$ be a fixed complex number. The \emph{linear skein} $\mathcal{S}(F)$ of $F$ is the vector space of formal linear sums over $\mathbb{C}$ of isotopy classes of link diagrams $D$ in $F$ quotiented by the relations 
\begin{enumerate}[(i)]
\item $D \sqcup \vcenter{\hbox{\includegraphics[scale=.10]{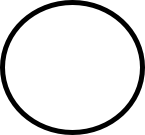}}} = (-A^{-2} - A^{2}) D$, and 
\item $ \vcenter{\hbox{\includegraphics[scale=.2]{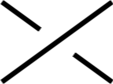}}} = A \ \vcenter{\hbox{\includegraphics[scale=.2]{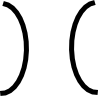}}} \ + A^{-1} \ \vcenter{\hbox{\includegraphics[scale=.2]{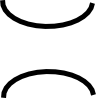}}} \ .$
\end{enumerate} 
\end{defn} 
We consider the linear skein $\mathcal{S}(\mathcal{D}^2, n)$ of the disc $\mathcal{D}^2$ with $2n$-points specified on its boundary, where the boundary is viewed as a rectangle with $n$ marked points above and below, see Figure \ref{fig:TLgen} for an example. For $D_1, D_2 \in \Sk(\mathcal{D}^2,n)$, there is a natural multiplication operation $D_1\cdot D_2$ defined by identifying the top boundary of $D_1$ with the bottom boundary of $D_2$.  This makes $\mathcal{S}(\mathcal{D}^2, n)$ into an algebra $TL_n$, called the $n$th \emph{Temperley-Lieb algebra}. The algebra $TL_n$ is generated by a basis $|_n, e^{1}_n, \ldots, e^{n-1}_{n}$, where $|_n$ is the identity with respect to the multiplication and $e^i_n$ is a crossing-less link diagram as specified below in Figure \ref{fig:TLgen}. 
\begin{figure}[ht]
\def \svgwidth{.5\columnwidth} 
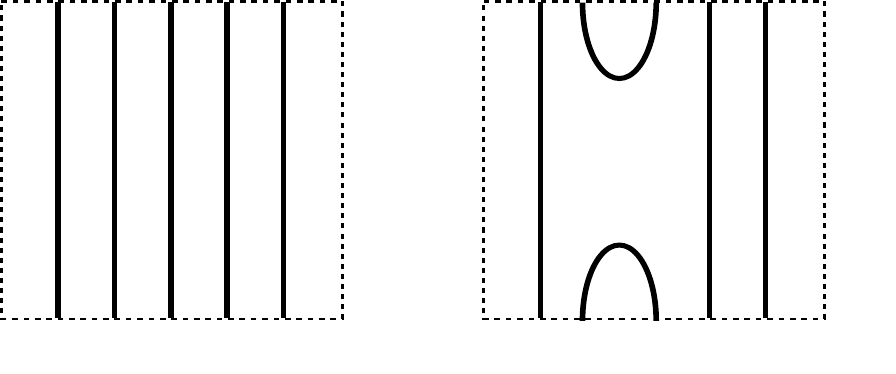
\caption{An example of the identity element $|_n$ and a generator $e^i_n$ of $TL_n$ for $n=5$ and $i=2$. \label{fig:TLgen}}
\end{figure}

We will use a shorthand notation which denotes $n$ parallel strands by $|_n$. 

Suppose that $A^4$ is not a $k$th root of unity for $k\leq n$. There is an element, which we will denote by $\jwproj_n$, in $TL_n$ called the $n$th \emph{Jones-Wenzl projector}, which is uniquely defined  by the following properties. For the original reference where the projector was defined and studied, see \cite{Wen87}. Whenever $n$ is specified we will simply refer to this element as the Jones-Wenzl projector. 
\begin{enumerate}[(i)]
\item $\jwproj_n \cdot e^i_n = e^i_n \cdot \jwproj_n =0$ for $1\leq i \leq n-1$. \label{list:prop1}
\item $\jwproj_n -|_n $ belongs to the algebra generated by $\{e^1_n, e^2_n,\ldots, e^{n-1}_n\}$. 
\item $\jwproj_n \cdot \jwproj_n = \jwproj_n$. 
\item The image of \ $\jwproj_n$ in $\mathcal{S}(\mathbb{R}^2)$, obtained by embedding the disc $\mathcal{D}^2$ in the plane and then joining the $n$ boundary points on the top with those on the bottom with $n$ disjoint planar parallel arcs outside of $\mathcal{D}^2$, is equal to
\[ \frac{(-1)^{n}(A^{2(n+1)}-A^{-2(n+1)})}{A^2-A^{-2}} \cdot \text{the empty diagram in $\mathbb{R}^2$}.\] \label{list:prop4}
\end{enumerate}

To simplify notation, we will let
\[\triangle_n =  \frac{(-1)^{n}(A^{2(n+1)}-A^{-2(n+1)})}{A^2-A^{-2}}.\] 

From the defining properties, the Jones-Wenzl projector also satisfies a recursion relation \eqref{eq:jwrecursive} and other identities \eqref{eq:jwid} and \eqref{eq:jwloopid} as indicated in the following figures. 
\begin{figure}[ht]
\begin{equation} \label{eq:jwrecursive}
\def\svgwidth{.8\columnwidth}
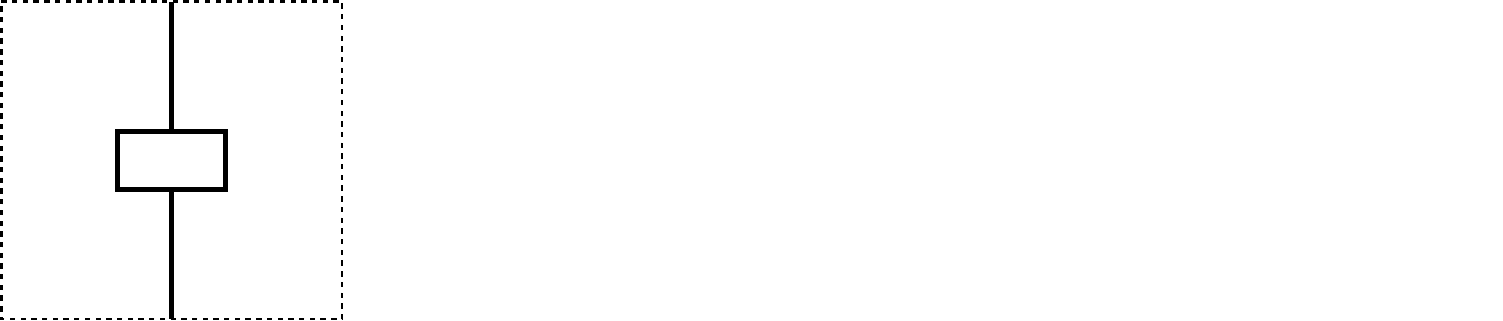
\end{equation}
\caption{A recursive relation for the Jones-Wenzl projector.}
\end{figure} 
\begin{figure}[ht]
\begin{equation} \label{eq:jwid}
\def\svgwidth{.5\columnwidth}
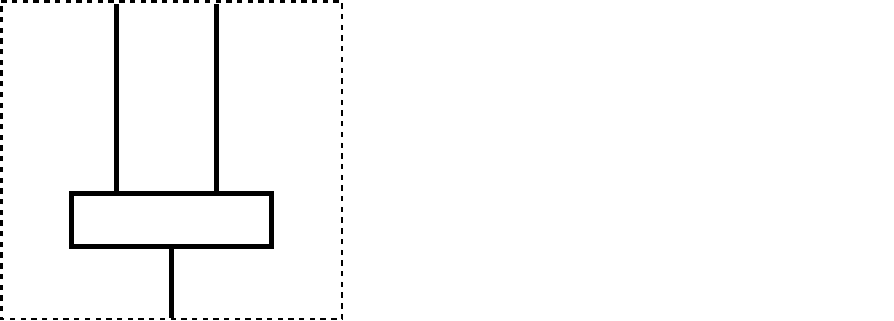
\end{equation}
\caption{An identity for the Jones-Wenzl projector.}
\end{figure} 
\begin{figure}[ht]
\begin{equation} \label{eq:jwloopid}
\def\svgwidth{.5\columnwidth}
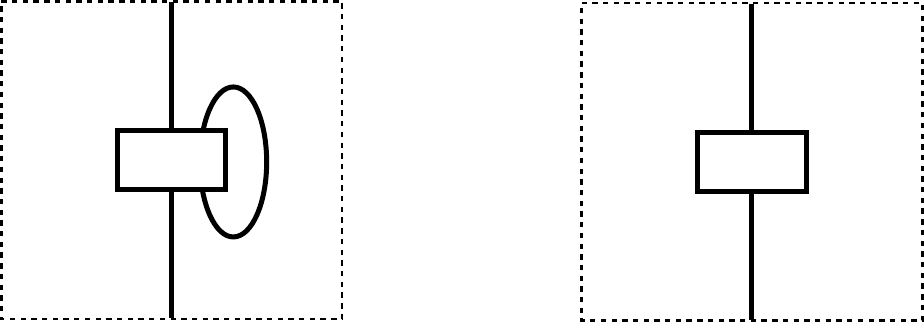
\end{equation}
\caption{Another identity for the Jones-Wenzl projector.}
\end{figure} 

\begin{defn}\label{defn:cjp}
Let $D$ be a diagram of a link $K\subset S^3$ with $k$ components. For each component $D_i$ for $i \in \{1,\ldots, k\}$ take an annulus $A_i$ via the blackboard framing. Let $\mathcal{S}(S^1\times I)$ be the linear skein of the annulus with no points marked on its boundary.
 Let 
\[ f_D: \underbrace{\Sk(S^1\times I) \times \cdots \times \Sk(S^1 \times I)}_{k \text{ times }} \rightarrow \Sk(\mathbb{R}^2)     \] be the map which sends a $k$-tuple of elements $(s_1, \ldots, s_k)$ to $S(\mathbb{R}^2)$ by immersing in the plane the collection of annuli containing the skeins such that the over- and under-crossings of $D$ are the over- and under-crossings of the annuli. 
 For $n\geq 2$, the \emph{$n$th unreduced colored Jones polynomial} $J_K^n(q)$ may be defined as 
\[ J_K^n(q) := ((-1)^{n-1}q^{\frac{(n-1)^2+2(n-1)}{4}})^{\omega(D)} \left\langle f_D\underbrace{\left(\vcenter{\hbox{\def \svgwidth{.05\columnwidth} 
\begingroup%
  \makeatletter%
  \providecommand\color[2][]{%
    \errmessage{(Inkscape) Color is used for the text in Inkscape, but the package 'color.sty' is not loaded}%
    \renewcommand\color[2][]{}%
  }%
  \providecommand\transparent[1]{%
    \errmessage{(Inkscape) Transparency is used (non-zero) for the text in Inkscape, but the package 'transparent.sty' is not loaded}%
    \renewcommand\transparent[1]{}%
  }%
  \providecommand\rotatebox[2]{#2}%
  \ifx\svgwidth\undefined%
    \setlength{\unitlength}{130.11428473bp}%
    \ifx\svgscale\undefined%
      \relax%
    \else%
      \setlength{\unitlength}{\unitlength * \real{\svgscale}}%
    \fi%
  \else%
    \setlength{\unitlength}{\svgwidth}%
  \fi%
  \global\let\svgwidth\undefined%
  \global\let\svgscale\undefined%
  \makeatother%
  \begin{picture}(1,1.09361117)%
    \put(0.17918315,0.01807304){\color[rgb]{0,0,0}\makebox(0,0)[lb]{\smash{$n-1$}}}%
    \put(0,0){\includegraphics[width=\unitlength,page=1]{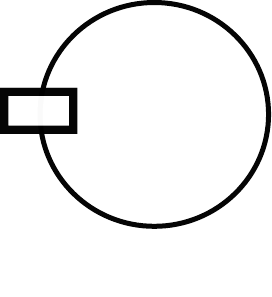}}%
  \end{picture}%
\endgroup%
}},  \vcenter{\hbox{\def \svgwidth{.05\columnwidth} }}, \cdots, \vcenter{\hbox{\def \svgwidth{.05\columnwidth} }} \hspace{.3cm} \right)}_{k \text{ times }} \right\rangle \biggr\rvert_{q^{1/4} = A^{-1}}, \] 
\end{defn} 
where $\langle D \rangle$ for a linear skein in $\mathcal{S}(\mathbb{R}^2)$ is the polynomial in $A$ multiplying the empty diagram after resolving crossings and removing disjoint circles of $D$ using the skein relations of Definition \ref{defn:skein}. Note that this gives $J_{\vcenter{\hbox{\includegraphics[scale=.07]{circ.png}}}}^n(q) = \triangle_{n-1} \rvert_{q^{1/4}=A^{-1}}$ as the normalization for the value of the colored Jones polynomial of the unknot. To simplify notation, we will write 
\[ D^{n-1}_{\jwproj}=f_D\left(\vcenter{\hbox{\def \svgwidth{.05\columnwidth} }},  \vcenter{\hbox{\def \svgwidth{.05\columnwidth} }}, \cdots, \vcenter{\hbox{\def \svgwidth{.05\columnwidth} }} \hspace{.3cm} \right)\] for the rest of this paper.

\subsection{Semi-adequate links} \label{subsec:semi-adequate}
Let $D$ be a diagram of a link $K$ in $S^3$. A \emph{Kauffman state} is a choice of replacing every crossing of $D$ by the $+$- or $-$-resolution as in Figure \ref{fig:abres}, with the dashed segment recording the location of the crossing before the replacement.

\begin{figure}[ht]
\centering
\def \svgwidth{.8\columnwidth}
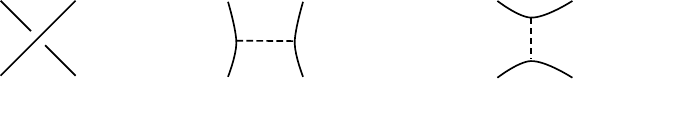
\caption{$+$- and $-$-resolutions of a crossing.}
\label{fig:abres}
\end{figure} 

Applying a Kauffman state results in a set of disjoint circles called \emph{state circles}. We form a $\sigma$-\emph{state graph} $s_{\sigma}(D)$ for each Kauffman state $\sigma$ by letting the resulting state circles be vertices and the segments be edges. The \emph{all-$+$} state graph $s_+(D)$ comes from the Kauffman state which chooses the $+$-resolution at every crossing of $D$. Similarly, the \emph{all-$-$} state graph $s_-(D)$ comes from the Kauffman state which chooses the $-$-resolution at every crossing of $D$.

\begin{defn}\label{defn:adequate-diagram} A link diagram $D$ is \emph{$+$-adequate} if its all-$+$ state graph has no one-edged loops (a one-edged loop is an edge that has both ends on the same vertex). A diagram is \emph{$-$-adequate} if its mirror image is $+$-adequate.  
\end{defn} 

\begin{defn} \label{defn:adequate} A link $K$ is \emph{semi-adequate} (\emph{$+$- or $-$-adequate}) if it admits a diagram that is $+$- or $-$-adequate. 
\end{defn} 

We consider the following combinatorial data of an oriented link diagram $D$ and a Kauffman state $\sigma$ on $D$.
\begin{itemize}
\item $c(D):=$ The number of crossings in $D$.
\item $\omega(D):=$ The writhe of $D$. 
\item $|s_{\sigma}(D)|:=$ The number of vertices in the $\sigma$-state graph of $D$. 
\item $\sgn(\sigma) := \sgn_+(\sigma) - \sgn_-(\sigma),$ where
\begin{align} 
\sgn_+(\sigma) &:= \text{The number of crossings where the $+$-resolution is chosen in $\sigma$, and}  \\  \notag
\sgn_-(\sigma) &:= \text{The number of crossings where the $-$-resolution is chosen in $\sigma$}.\end{align}
\end{itemize} 
We will use the special notations $``\sgn(+)"$ and $``\sgn(-)"$ for the sign $\sgn$ of the all-$+$ state,  and the sign of the all-$-$ state, respectively. 
Let
\begin{align}
H_n(D) &= n^2c(D) + 2n|s_+(D)| \label{eq:upperbound} 
\intertext{and}
h_n(D) &= -\frac{1}{4}H_{n-1}(D)
 + \frac{(n-1)^2+2(n-1)}{4}\omega(D). \label{eq:lowerbound} \end{align}

We use the following lemmas from skein theory involving the notion of adequacy. The degree of a rational function that we consider is the maximum power of the Laurent series expression of the function with the power of each term in the series bounded from above.

\begin{defn} Let $\Sk$ be a skein in $\Sk(\mathbb{R}^2)$ with crossings. Let $\sigma$ and $\sigma'$ be Kauffman states on the crossings of $\Sk$. We say that there is a \emph{sequence of states} from $\sigma$ to $\sigma'$ if there is a finite sequence $\sigma_1=\sigma, \sigma_2, \ldots, \sigma_f = \sigma'$ such that for each $i$, there is a distinct crossing $x_i$ on which $\sigma_i$ chooses the $A$-resolution and $\sigma_{i+1}$ chooses the $B$-resolution, and they are otherwise the same on every other crossing of $\Sk$.
\end{defn} 
Note that there is a sequence of states from the all-$+$ state to any other state. 

\begin{defn} Let $\Sk$ be a skein in $\Sk(\mathbb{R}^2)$ which may have crossings and  may be decorated by Jones-Wenzl projectors, we will denote by $\overline{\Sk}$ the skein obtained from $\Sk$ where every projector is replaced by the identity. 
\end{defn} 
\begin{lem}{\cite[Lemma 5.6]{Lic97}} \label{lem:sstates} Let $\sigma$ and $\sigma'$ be two Kauffman states on a skein $\Sk \in \Sk(\mathbb{R}^2)$ with a sequence of states $\{ \sigma_i \}_{i=1}^f$ from $\sigma$ to $\sigma'$, and let $\overline{\Sk_{\sigma}}$ and $\overline{\Sk_{\sigma'}}$ be the skeins resulting from applying the Kauffman states $\sigma$ and $\sigma'$, respectively, to $\Sk$ and replacing every Jones-Wenzl projector by the identity. Suppose $m\geq 0$ is the number of pairs $(\sigma_i, \sigma_{i+1})$ in the sequence where the number of circles in $\overline{\Sk_{\sigma_{i+1}}}$ is one fewer than that of $\overline{\Sk_{\sigma_i}}$, then  
\[\deg \left( A^{\sgn(\sigma')}\langle \overline{\Sk_{\sigma'}} \rangle\right) \leq \deg \left(A^{\sgn(\sigma)}\langle \overline{\Sk_{\sigma}} \rangle\right) - 4m. \] 
\end{lem} 
Note that from $\sigma_i$ to $\sigma_{i+1}$ in a sequence, either a pair of circles is merged or a circle is split into two. 

The definition of an adequate skein is due to Armond \cite{Arm13}. 
\begin{defn} Let $\Sk \in \Sk(\mathbb{R}^2)$ be a crossing-less skein decorated by Jones-Wenzl projectors $\jwproj_n$. Consider the skein $\overline{\Sk}$ constructed from $\Sk$ by replacing each of the Jones-Wenzl projectors by the identity in $TL_n$. Consider the regions in $\overline{\Sk}$ where the projectors had previously been. We say that $\Sk$ is \emph{adequate} if no circle in $\overline{\Sk}$ passes through any of these regions more than once. 
\end{defn}

\begin{lem}[{\cite[Lemma 4]{Arm13}}]\label{lem:jwid} Let $\Sk \in \Sk(\mathbb{R}^2)$ be a crossing-less skein decorated by Jones-Wenzl projectors $\jwproj_n$, and let $\overline{\Sk}$ be the skein obtained by replacing each Jones-Wenzl projector by the identity element $ |_n$, then 
\[\deg  \langle \Sk \rangle  \leq \deg \langle \overline{\Sk} \rangle,    \] and equality is achieved when $\Sk$ is adequate.  
\end{lem}
The next lemma follows immediately from \cite[Lemma 5.6]{Lic97}. 
\begin{lem}\label{lem:Adegree} Let $\Sk\in \Sk(\mathbb{R}^2)$ be a skein with crossings decorated by the Jones-Wenzl projector $ \jwproj_n$ for some fixed $n$, and let $\Sk_+$ be the skein resulting from applying the all-$+$ Kauffman state on the crossings of $\Sk$, then 
\[\deg\langle \Sk \rangle \leq \deg \left( A^{\sgn(+)} \langle \overline{\Sk_+} \rangle \right).   \] 
\end{lem}
\begin{proof}
Let $\Sk_{\sigma}$ be the skein resulting from applying the Kauffman state $\sigma$ on the crossings of $\Sk$. Then 
\[ \langle \Sk \rangle =\sum_{\sigma}  A^{\sgn(\sigma)} \langle \Sk_{\sigma} \rangle .\] By Lemma \ref{lem:jwid}, 
\[ \deg \langle \Sk_{\sigma} \rangle  \leq \deg \langle \overline{\Sk_{\sigma}}\rangle.  \] 
Now 
\[ \deg \left( A^{\sgn(\sigma)} \langle \overline{\Sk_{\sigma}} \rangle \right) \leq \deg \left( A^{sgn(+)} \langle \overline{\Sk_{+}} \rangle \right)\] by considering the link diagram $\overline{\Sk}$ and applying Lemma 2.7, since there is a sequence of states from the all-$+$ state to any other state. 
\end{proof}  

We use Lemma \ref{lem:jwid} and \ref{lem:Adegree} to give a proof of the following fact used to establish the degree of the colored Jones polynomial of a $+$-adequate link \cite{Lic97}.
\begin{cor} \label{cor:arealize} Let $D$ be a link diagram and $D^n_{\jwproj}$ be the $n$-blackboard cable of $D$ with each component decorated by a Jones-Wenzl projector as in Definition \ref{defn:cjp}, then 
\[\deg \langle D^n_{\jwproj} \rangle \leq H_n(D).\] Therefore, 
\[ h_n(D) \leq d(n),\] and equality is achieved when $D$ is $+$-adequate.   
\end{cor}
\begin{proof}
Let $\sigma$ be a Kauffman state on the set of crossings of $D^n_{\jwproj}$, with $S^n_{\sigma}$ the skein resulting from applying the state $\sigma$ and $S^n_+$ the skein resulting from applying the all-$+$ state. The inequality follows from writing 
\[ \langle D^n_{\jwproj} \rangle  = \sum_{\sigma} A^{\sgn(\sigma)}\langle \Sk^n_{\sigma} \rangle, \] 
Lemma \ref{lem:Adegree}, and recognizing that $\deg \left( A^{\sgn(+)} \langle \overline{ S^n_{+}} \rangle \right) = H_n(D)$. The skein $\Sk^n_+$ is adequate, so by Lemma \ref{lem:jwid} we have  
\[ \deg \langle S^n_{+} \rangle = \deg \langle \overline{ S^n_{+}} \rangle.  \]   Now if $D$ is $+$-adequate, then $D^n$ is also $+$-adequate, which one can directly check as in  \cite[Lemma 5.12]{Lic97}. A sequence of states from the all-$+$ state to any other state necessarily contains a pair which merges a pair of circles. This implies
\[ \deg \left( A^{\sgn(\sigma)}\langle \overline{ \Sk^n_{\sigma}} \rangle \right) <  \deg \left( A^{\sgn(+)} \langle  \overline{\Sk^n_{+}}\rangle \right).\]  Thus
\[ \deg \langle D^n_{\jwproj} \rangle = \deg \left( A^{\sgn(+)} \langle  \overline{\Sk^n_{+}}\rangle \right) = H_n(D). \]

\end{proof} 

\section{Proof of Theorem \ref{thm:tail}} \label{sec:main}
Throughout this section, we assume that $D$ is not $+$-adequate, and therefore, it has a one-edged loop in its all-$+$ state graph. Recall that, as in Definition \ref{defn:cjp}, we may obtain $J^n_K(q)$ by evaluating the Kauffman bracket on $D^n_{\jwproj}$, the $n$-blackboard cable $D^n$ decorated by a Jones-Wenzl projector. In our case, we will consider the decoration by four Jones-Wenzl projectors around a fixed cabled crossing $c$, which corresponds to a loop in the all-$+$ state graph of $D$. See Figure \ref{fig:cabledcrossing} for what is meant by cabling a crossing and Figure \ref{fig:dcrossing} for an example of a chosen crossing framed by projectors. Without loss of generality, we assume the loop is attached on the inside of a state circle $S$ of $s_+(D)$. Since this skein is obtained from $D^n_{\jwproj}$ by doubling and sliding the projector on each component using the defining properties of the projector, it is equivalent to $D^n_{\jwproj}$ and we will also denote the resulting skein by $D^n_{\vcenter{\hbox{\includegraphics[scale=.1]{jwproj.png}}}}$ by a slight abuse of notation.  
\begin{figure}[ht]
\begin{center}
\def\svgwidth{.8\columnwidth}
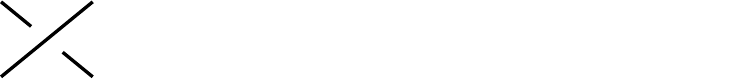
\end{center}
\caption{\label{fig:cabledcrossing} A crossing and its 3-cable.}
\end{figure}
\begin{figure}[ht]
\begin{center}
\def\svgwidth{\columnwidth}
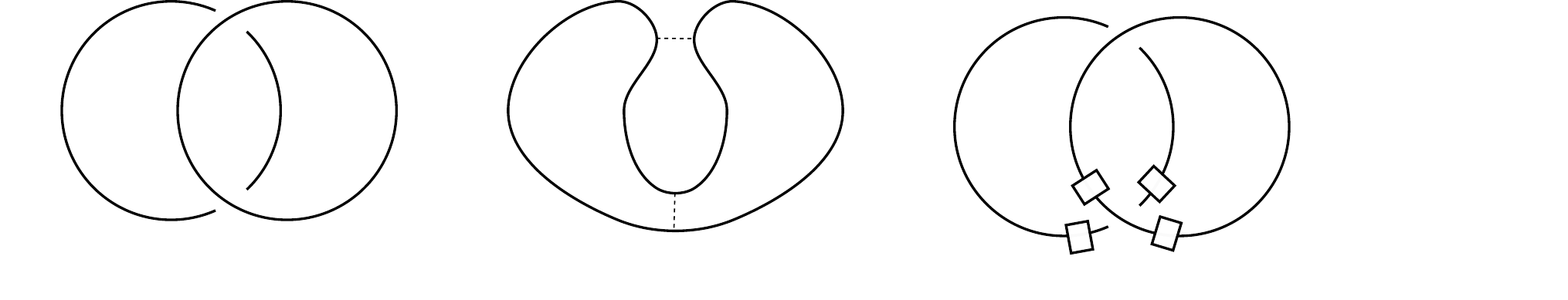
\end{center}
\caption{\label{fig:dcrossing} We put four Jones-Wenzl projectors around a single crossing $c$.}
\end{figure}

Theorem \ref{thm:tail} will follow from the equivalent statement given below for the Kauffman bracket with variable $A$. Recall that by Corollary \ref{cor:arealize}, for any link diagram we have 
\[\deg \langle D^n_{\jwproj} \rangle \leq H_n(D).\]  
\begin{thm} \label{thm:equiv}
Let $K$ be a link with diagram $D$ and $H_n(D)$ be defined from $D$ as in \eqref{eq:upperbound}. If $D$ is not $+$-adequate, then $\deg \langle D^n_{\jwproj} \rangle \leq H_n(D) - 4(n-1)$ for $n\geq 1$. 
\end{thm}
Henceforth, we will work exclusively with the Kauffman bracket. 

Let $\Sk$ be a skein with crossings in $\Sk(\mathbb{R}^2)$ which may or may not be decorated by Jones-Wenzl projectors. We denote by $\Sk_{\sigma}$ the crossing-less skein obtained by applying a Kauffman state $\sigma$ to the crossings of $\Sk$. A $\sigma$-state graph $s_{\sigma}(\Sk)$ is then the set of disjoint circles with segments as before, except for the presence of projectors. 

Let $D$ be a link diagram and consider a skein $\Sk^n_{\sigma}$ obtained by applying a Kauffman state $\sigma$ to the crossings of $D^n_{\vcenter{\hbox{\includegraphics[scale=.1]{jwproj.png}}}}$. We have the following state sum for the Kauffman bracket of $D^n_{\jwproj}$. 
\begin{equation} \langle D^n_{\jwproj} \rangle = \sum_{\sigma} A^{\sgn(\sigma)} \langle \mathcal{S}^n_{\sigma} \rangle. \end{equation}

\subsection*{Strategy for proof of Theorem \ref{thm:equiv}} The strategy for the proof of Theorem \ref{thm:equiv} is to characterize the states in the above equation relevant to the last $n-2$ coefficients of $\langle D^n_{\jwproj} \rangle$ from $H_n(D)$ for $D$ a non $+$-adequate diagram. Then, Lemma \ref{lem:avariant}  and Lemma \ref{lem:localform} are used to relate these states to the skein of a diagram of the unknot in Definition \ref{defn:unknotwist}, which establishes that these last $n-2$ coefficients are trivial by Lemma \ref{lem:generic-loop}.

We make the following definition which generalizes the $\doteq_n$ equivalence in \cite[Pg. 1]{Arm13}. 
\begin{defn}
Let $s$ and $m$ be two integers $\geq 0$ and $P_1(A)$ and $P_2(A)$ be two Laurent series in $A$. We write 
\[P_1(A) \doteq^s_m P_2(A) \] if and only if 
the coefficients of $A^s, A^{s-4(1)}, \ldots, A^{s-4(m-1)}$ and $A$'s with power $> s$ in $P_1(A)$ agree with those of $P_2(A)$. For example, $2A^9-A^5 +A^1 \doteq^9_2 2A^9-A^5 + 4A^1$. For two skeins $\Sk_1$, $\Sk_2 \in \Sk(\mathbb{R}^2)$, we write $\Sk_1 \doteq^s_m \Sk_2$ if $\langle \Sk_1 \rangle \doteq^s_m \langle \Sk_2 \rangle$.
\end{defn}

The following lemma is an important variant of \cite[Lemma 10]{Arm13}.

\begin{lem} \label{lem:avariant}
Let $0\leq k\leq n$.
\begin{figure}[ht]
\fontsize{8}{10}\selectfont
\def \svgwidth{\columnwidth}
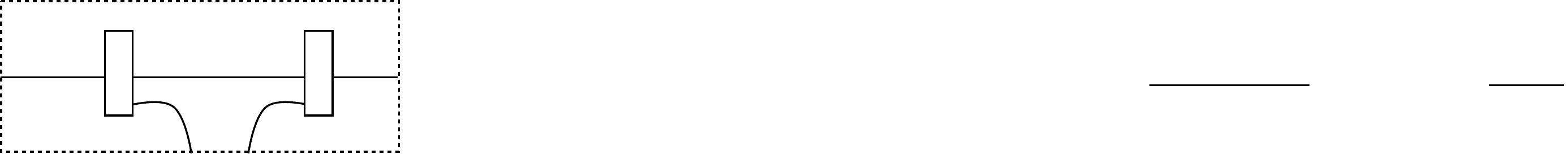
\end{figure} 
\end{lem}

\begin{proof}
Similar to the proof of \cite[Lemma 10]{Arm13}, we apply the recursive relation \eqref{eq:jwrecursive} to the left projector in the picture on the left side of the equality above to get the following equality as indicated in \eqref{eq:avariant2}.
\begin{figure}[H]
\fontsize{8}{10}\selectfont
\def \svgwidth{\columnwidth}
\begin{equation}
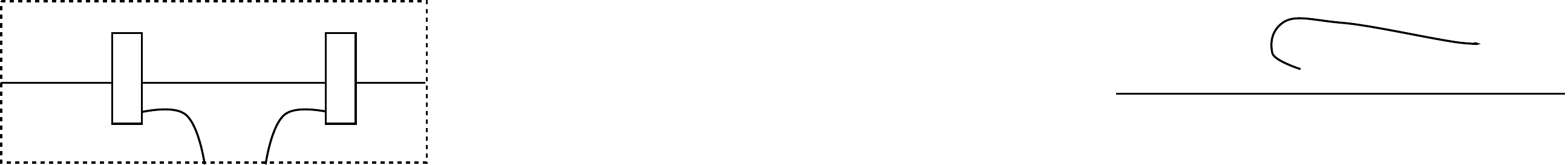 \label{eq:avariant2}
\end{equation}
\end{figure} 

For $n-k>1$, we apply the recursive relation again to the middle projector in the last picture in the above equation.
\begin{figure}[ht]
\fontsize{8}{10}\selectfont
\def \svgwidth{\columnwidth}
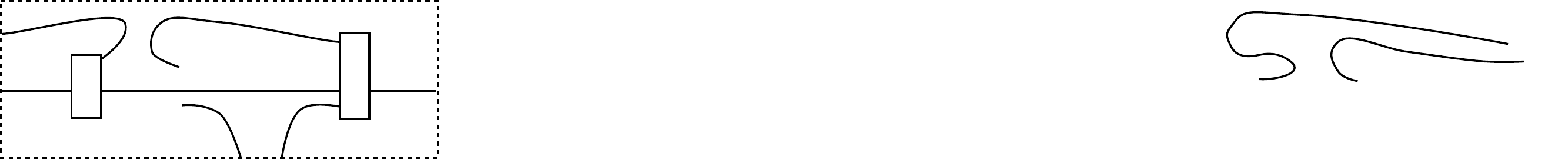
\end{figure} 
It is clear that the second term in the resulting sum is zero, due to property (i) of the Jones-Wenzl projector. 
Sliding to the left the second projector from the left in the last picture above by \eqref{eq:jwid}, and combining with \eqref{eq:avariant2}, we see that we get the lemma by repeated expansion (($n-k$) times) via the recursion relation. 
\end{proof}

We consider a particular type of skeins from a diagram of the unknot and study their $\doteq^s_m$ equivalences using Lemma \ref{lem:avariant}. 
\begin{defn} \label{defn:unknotwist} Let $U^n_{\jwproj}$ be the $n$-blackboard cable of the diagram of the unknot with one left-hand half-twist added and decorated by four Jones-Wenzl projectors $\jwproj_n$ framing the crossing. Consider the set of disjoint circles $\overline{U_+^n}$ from applying the all-$+$ Kauffman state to the crossings of $U^n_{\jwproj}$, and then replacing each Jones-Wenzl projector by the identity. Label each of them, innermost first, as $S_1, \ldots, S_n$. For $1\leq j < n$, let $U^{n, j}$ be the skein obtained from $U_+^n$ by removing all circles and segments resulting from applying the all-$+$ Kauffman state outside of $S_{j}$, and replacing each remaining segment by the corresponding crossing before choosing the $+$-resolution. See Figure \ref{fig:sskein} for an example. \end{defn} 

\begin{figure}[H]
\def \svgwidth{\columnwidth}
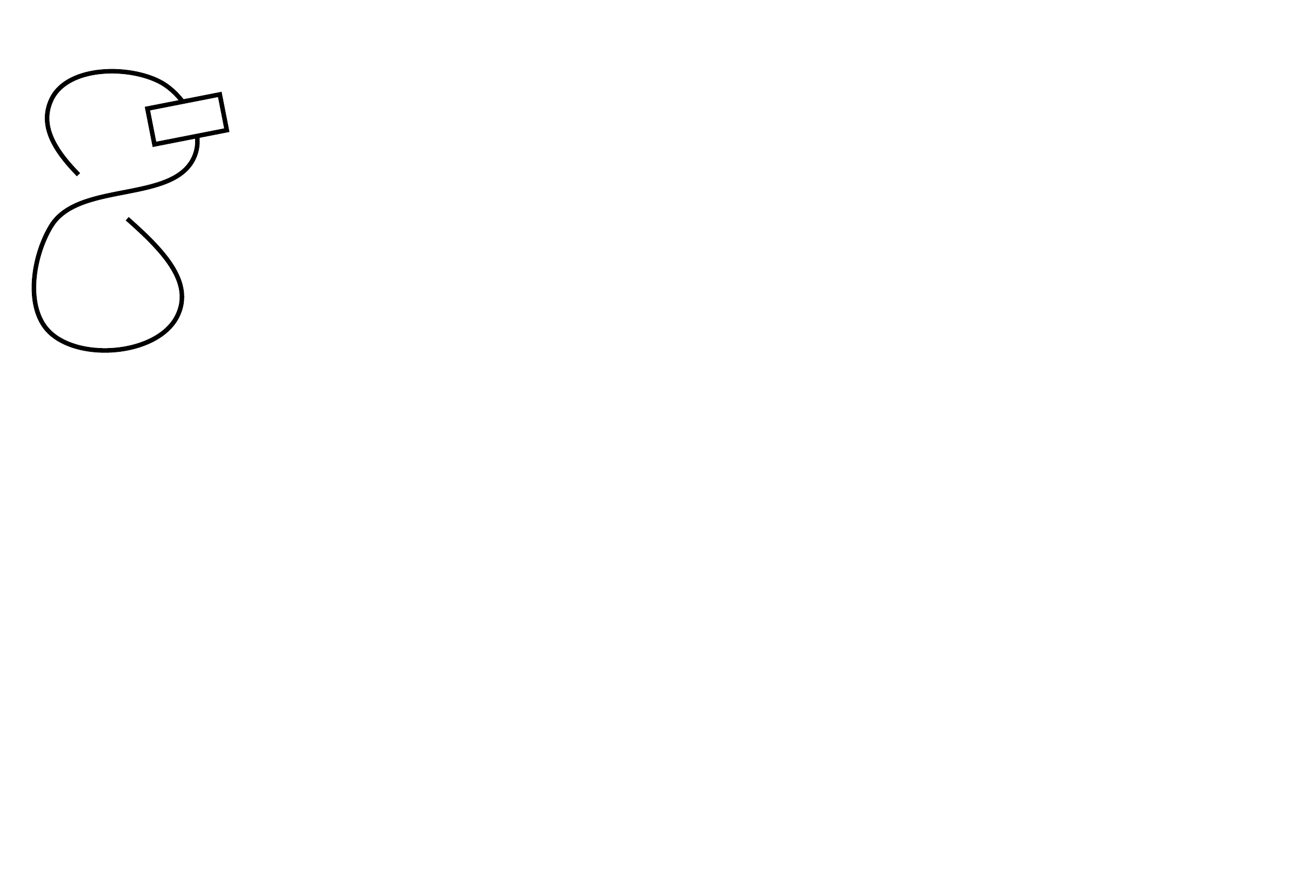
\caption{\label{fig:sskein} An example of $U^{n,j}$ where $j = 3$ and $n = 8 = 2j + 2$.}
\end{figure} 
\ \\ 

The key lemma below computes a degree bound on $\langle U^{n,j}\rangle$ for $U^{n, j}$ from Definition \ref{defn:unknotwist}. 

\begin{lem} \label{lem:generic-loop} Let $U^{n, j}_+$ be the skein obtained from $U^{n, j}$ as defined in Definition \ref{defn:unknotwist} by applying the all-$+$ Kauffman state to its crossings, then we have for $1\leq j < n$, 
\begin{equation} \label{eq:specbound}
\deg \langle U^{n, j} \rangle \leq n^2-(n-j)(n-j+1) + \deg \langle \overline{U^{n, j}_+} \rangle - 4j. \end{equation}
\end{lem}
\begin{proof}
Firstly, the number of crossings of $U^{n, j}$ is $n^2-(n-j)(n-j+1)$, thus 
\[ \deg \langle U^{n, j} \rangle \leq \underbrace{n^2-(n-j)(n-j+1) + \deg \langle \overline{U^{n, j}_+} \rangle}_{\text{general upper bound}} \] by Lemma \ref{lem:Adegree}.  Write $n = n'j + r$ with the smallest nonnegative remainder $r$. We have $n'\geq 1$ because of the assumption that $j< n$. The skein $U^{n, j}$ has $n'$ full kinks on $j$ strands from examining the braid word and removing crossings with the presence of the projectors. Removing each full twist on $j$ strands will decrease the degree by $j^2+2j$ \cite[Lemma 14.1]{Lic97} in addition to removing $j^2$ crossings. Thus the degree decreases from the upper bound $n^2-(n-j)(n-j+1) + \deg \langle \overline{U^{n, j}_+} \rangle$ from taking the all-$+$ state by $n'(2j^2+2j) \geq 4j$ for $n', j \geq 1$. 
\end{proof}

Now we consider skeins which can be reduced to $U^{n, j}$ through $\doteq^{s}_{*}$-equivalences.

\begin{lem} \label{lem:localform} Suppose that we have a skein $\Sk$ in $\Sk(\mathbb{R}^2)$ of the form shown below in Figure \ref{fig:twocase}. Consider the set of circles of $\overline{\Sk}$ and number the circles, innermost first, as $S_1, \ldots, S_j$, where $j = n-(\ell + \ell')$. Let $L^{n, j}$ be the subset of the crossings of $U^{n,j}$ inside $S_j$, and let $|L^{n, j}|$ be the number of crossings in  $L^{n, j}$.

\begin{figure}[H]
\def \svgwidth{.4\columnwidth}
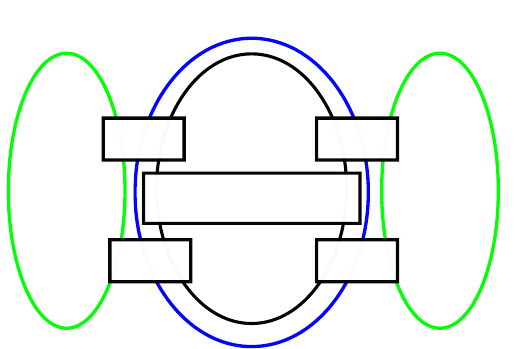
\caption{\label{fig:twocase} The skein $\Sk$ composed of crossings from $U^{n, j}$ and $\ell + 2\ell'$ circles attached to the projectors.}\end{figure}

Then, 
\begin{equation}
\def \svgwidth{.7\columnwidth}
\begingroup%
  \makeatletter%
  \providecommand\color[2][]{%
    \errmessage{(Inkscape) Color is used for the text in Inkscape, but the package 'color.sty' is not loaded}%
    \renewcommand\color[2][]{}%
  }%
  \providecommand\transparent[1]{%
    \errmessage{(Inkscape) Transparency is used (non-zero) for the text in Inkscape, but the package 'transparent.sty' is not loaded}%
    \renewcommand\transparent[1]{}%
  }%
  \providecommand\rotatebox[2]{#2}%
  \ifx\svgwidth\undefined%
    \setlength{\unitlength}{615.94831332bp}%
    \ifx\svgscale\undefined%
      \relax%
    \else%
      \setlength{\unitlength}{\unitlength * \real{\svgscale}}%
    \fi%
  \else%
    \setlength{\unitlength}{\svgwidth}%
  \fi%
  \global\let\svgwidth\undefined%
  \global\let\svgscale\undefined%
  \makeatother%
  \begin{picture}(1,0.27138362)%
    \put(0,0){\includegraphics[width=\unitlength,page=1]{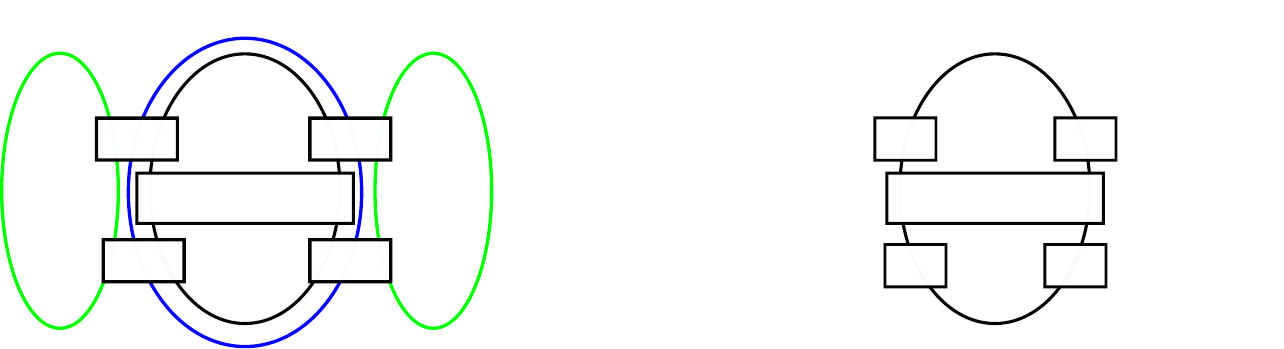}}%
    \put(0.15521435,0.25164778){\color[rgb]{0,0,0}\makebox(0,0)[lb]{\smash{$\ell$}}}%
    \put(0.31820854,0.25164778){\color[rgb]{0,0,0}\makebox(0,0)[lb]{\smash{$\ell'$}}}%
    \put(0.00649407,0.25164778){\color[rgb]{0,0,0}\makebox(0,0)[lb]{\smash{$\ell'$}}}%
    \put(0.15325963,0.10910007){\color[rgb]{0,0,0}\makebox(0,0)[lb]{\smash{$L^{n, j}$}}}%
    \put(0.17144298,0.18572987){\color[rgb]{0,0,0}\makebox(0,0)[lb]{\smash{$j$}}}%
    \put(0.73772426,0.10910007){\color[rgb]{0,0,0}\makebox(0,0)[lb]{\smash{$L^{n, j}$}}}%
    \put(0.39196937,0.11116937){\color[rgb]{0,0,0}\makebox(0,0)[lb]{\smash{$\doteq^s_{j-1}(-A)^{2(\ell+2\ell')}$}}}%
    \put(0.74811474,0.18572987){\color[rgb]{0,0,0}\makebox(0,0)[lb]{\smash{$j$}}}%
  \end{picture}%
\endgroup%

\end{equation}
where $s = |L^{n, j}|+ \deg \langle \overline{\Sk_+} \rangle.$  
\end{lem} 

\begin{proof}
We apply Lemma \ref{lem:avariant} to the pair of projectors on the right side of $\Sk$. 
\begin{equation}
\def \svgwidth{\columnwidth}
\begingroup%
  \makeatletter%
  \providecommand\color[2][]{%
    \errmessage{(Inkscape) Color is used for the text in Inkscape, but the package 'color.sty' is not loaded}%
    \renewcommand\color[2][]{}%
  }%
  \providecommand\transparent[1]{%
    \errmessage{(Inkscape) Transparency is used (non-zero) for the text in Inkscape, but the package 'transparent.sty' is not loaded}%
    \renewcommand\transparent[1]{}%
  }%
  \providecommand\rotatebox[2]{#2}%
  \ifx\svgwidth\undefined%
    \setlength{\unitlength}{961.03009081bp}%
    \ifx\svgscale\undefined%
      \relax%
    \else%
      \setlength{\unitlength}{\unitlength * \real{\svgscale}}%
    \fi%
  \else%
    \setlength{\unitlength}{\svgwidth}%
  \fi%
  \global\let\svgwidth\undefined%
  \global\let\svgscale\undefined%
  \makeatother%
  \begin{picture}(1,0.20524224)%
    \put(0,0){\includegraphics[width=\unitlength,page=1]{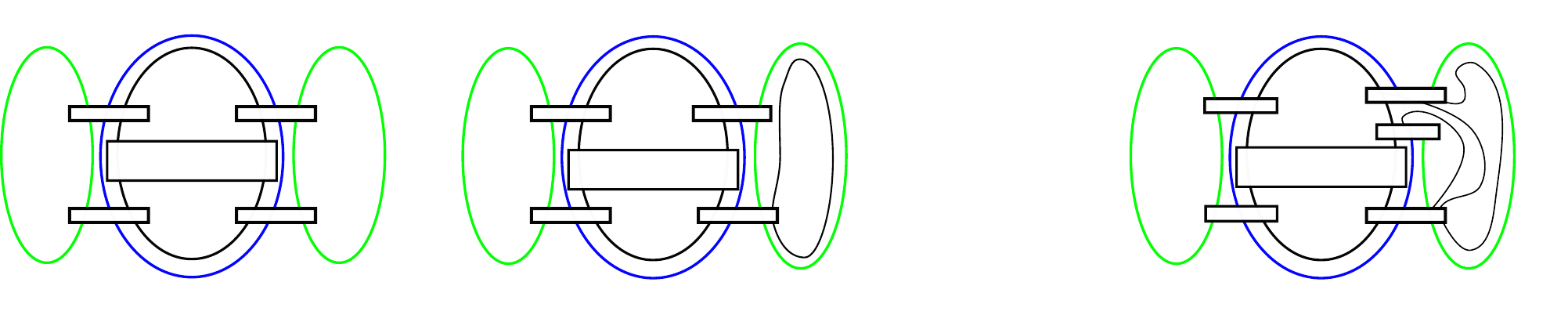}}%
    \put(0.10103688,0.09519758){\color[rgb]{0,0,0}\makebox(0,0)[lb]{\smash{$L^{n, j}$}}}%
    \put(0.38906115,0.09020293){\color[rgb]{0,0,0}\makebox(0,0)[lb]{\smash{$L^{n, j}$}}}%
    \put(0.49989456,0.08497045){\color[rgb]{0,0,0}\makebox(0,0)[lb]{\smash{$1$}}}%
    \put(0.0113712,0.1892633){\color[rgb]{0,0,0}\makebox(0,0)[lb]{\smash{$\ell'$}}}%
    \put(0.09473412,0.19259306){\color[rgb]{0,0,0}\makebox(0,0)[lb]{\smash{$\ell$}}}%
    \put(0.19795667,0.18902546){\color[rgb]{0,0,0}\makebox(0,0)[lb]{\smash{$\ell'$}}}%
    \put(0.30771986,0.18759842){\color[rgb]{0,0,0}\makebox(0,0)[lb]{\smash{$\ell'$}}}%
    \put(0.39108275,0.19092818){\color[rgb]{0,0,0}\makebox(0,0)[lb]{\smash{$\ell$}}}%
    \put(0.49430532,0.18736058){\color[rgb]{0,0,0}\makebox(0,0)[lb]{\smash{$\ell'-1$}}}%
    \put(0.25627267,0.10976526){\color[rgb]{0,0,0}\makebox(0,0)[lb]{\smash{\\ }}}%
    \put(0.26542951,0.10664709){\color[rgb]{0,0,0}\makebox(0,0)[lb]{\smash{$=$}}}%
    \put(0.10762266,0.15198187){\color[rgb]{0,0,0}\makebox(0,0)[lb]{\smash{$j$}}}%
    \put(0.40230642,0.15198187){\color[rgb]{0,0,0}\makebox(0,0)[lb]{\smash{$j$}}}%
    \put(0.70935789,0.24473947){\color[rgb]{0,0,0}\makebox(0,0)[lb]{\smash{}}}%
    \put(0.2273016,0.31736193){\color[rgb]{0,0,0}\makebox(0,0)[lt]{\begin{minipage}{0.27232678\unitlength}\raggedright \end{minipage}}}%
    \put(0.40956055,0.00392647){\color[rgb]{0,0,0}\makebox(0,0)[lb]{\smash{$\Sk^1$}}}%
    \put(0.09989281,0.00392647){\color[rgb]{0,0,0}\makebox(0,0)[lb]{\smash{$\Sk$}}}%
    \put(0.7311175,0.18550424){\color[rgb]{0,0,0}\makebox(0,0)[lb]{\smash{$\ell'$}}}%
    \put(0.81448045,0.188834){\color[rgb]{0,0,0}\makebox(0,0)[lb]{\smash{$\ell$}}}%
    \put(0.91770302,0.1852664){\color[rgb]{0,0,0}\makebox(0,0)[lb]{\smash{$\ell'-1$}}}%
    \put(0.82403918,0.14988771){\color[rgb]{0,0,0}\makebox(0,0)[lb]{\smash{$j$}}}%
    \put(0.82962838,0.00183227){\color[rgb]{0,0,0}\makebox(0,0)[lb]{\smash{$\Sk^2$}}}%
    \put(0.92816796,0.05933008){\color[rgb]{0,0,0}\makebox(0,0)[lb]{\smash{$1$}}}%
    \put(0.55183129,0.10529267){\color[rgb]{0,0,0}\makebox(0,0)[lb]{\smash{$+(-1)^{n-j}\frac{\triangle_{j-1}}{\triangle_{n-1}}$}}}%
    \put(0.81578856,0.08977364){\color[rgb]{0,0,0}\makebox(0,0)[lb]{\smash{$L^{n, j}$}}}%
  \end{picture}%
\endgroup%

\end{equation}
We write $\Sk^1$ for the first term of the sum. For the second term $\Sk^2$ of the sum, we have the local picture shown below where Lemma \ref{lem:avariant} was applied.
\begin{figure}[H]
\begin{center}\def\svgwidth{.4\columnwidth}
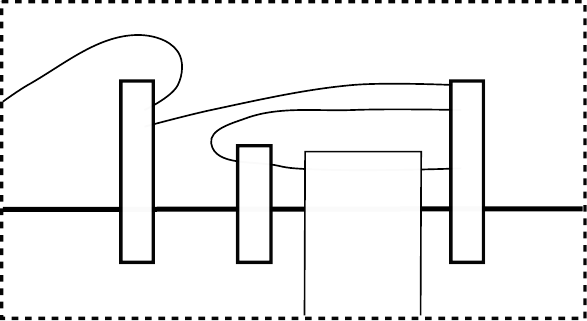
\end{center}

We compare the maximum degree of the Kauffman bracket of $\Sk^1$ and $\Sk^2$. Note first that $\deg \left( A^{\sgn(+)} \langle \overline{\Sk^1_+} \rangle \right) =  \deg \left(   A^{\sgn(+)} \langle  \overline{\Sk_+} \rangle \right) = s$, where $\sgn(+) = |L^{n, j}|$.  Now 
\[\deg \frac{\triangle_{j-1}}{\triangle_{n-1}}= \deg \frac{\triangle_{n-\ell-\ell'-1}}{\triangle_{n-1}} = -2(\ell+ \ell'). \] 
\end{figure}

Let $\sigma$ be a Kauffman state on the crossings of $\Sk^2$, then 
\[ \langle \Sk^2 \rangle = \sum_{\sigma} A^{\sgn(\sigma)}\langle\Sk^2_{\sigma} \rangle.  \]  
In order to estimate the maximum degree of $\langle \Sk^2\rangle$ relative to $A^{\sgn(+)}\langle \overline{ \Sk^1_+} \rangle$, we estimate the degree of each term in the sum above. 

Let $k$ be the largest number $\in \{1, \ldots, j\}$ such that $\sigma$ chooses the $-$-resolution on a crossing of $L^{n, j}$ between $S_{k-1}$ and $S_{k}$. The state $\sigma$ has to choose the $-$-resolution for some crossing between $S_{i-1}$ and $S_{i}$ for all $i \in \{2, \ldots, k \}$. Otherwise, there would be a cap or a cup composed with a projector that would make $\langle \Sk^{2'}_{\sigma} \rangle =0$. In the pictorial calculations to follow, we will denote by $L_{\sigma}$ the result of applying $\sigma$ to the crossings in $L^{n, j}$. Applying Lemma \ref{lem:avariant}, we have 
\begin{equation*}
\def \svgwidth{.9\columnwidth}
\begingroup%
  \makeatletter%
  \providecommand\color[2][]{%
    \errmessage{(Inkscape) Color is used for the text in Inkscape, but the package 'color.sty' is not loaded}%
    \renewcommand\color[2][]{}%
  }%
  \providecommand\transparent[1]{%
    \errmessage{(Inkscape) Transparency is used (non-zero) for the text in Inkscape, but the package 'transparent.sty' is not loaded}%
    \renewcommand\transparent[1]{}%
  }%
  \providecommand\rotatebox[2]{#2}%
  \ifx\svgwidth\undefined%
    \setlength{\unitlength}{960.73051873bp}%
    \ifx\svgscale\undefined%
      \relax%
    \else%
      \setlength{\unitlength}{\unitlength * \real{\svgscale}}%
    \fi%
  \else%
    \setlength{\unitlength}{\svgwidth}%
  \fi%
  \global\let\svgwidth\undefined%
  \global\let\svgscale\undefined%
  \makeatother%
  \begin{picture}(1,0.39763444)%
    \put(0,0){\includegraphics[width=\unitlength,page=1]{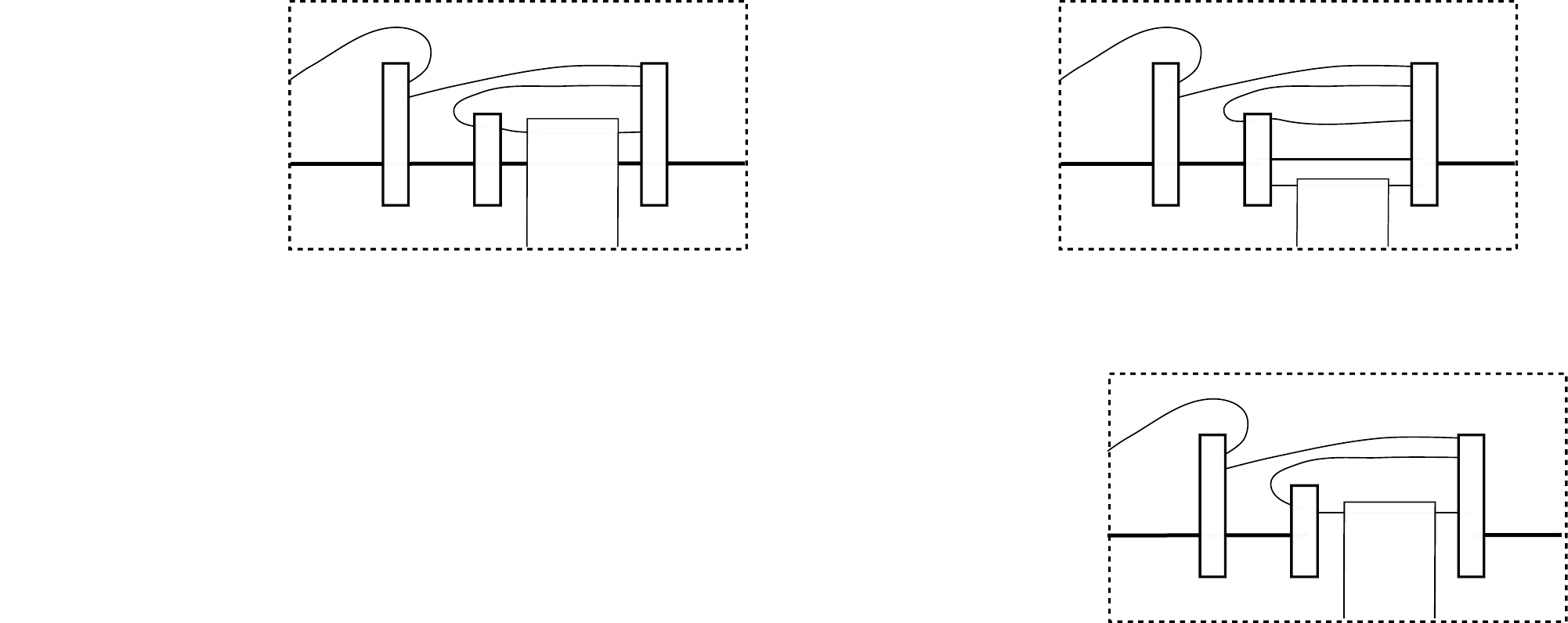}}%
    \put(0.33942975,0.27023139){\color[rgb]{0,0,0}\makebox(0,0)[lb]{\smash{$L_{\sigma}$}}}%
    \put(0.19370731,0.2410869){\color[rgb]{0,0,0}\makebox(0,0)[lb]{\smash{$n-1$}}}%
    \put(0.20203431,0.33851278){\color[rgb]{0,0,0}\makebox(0,0)[lb]{\smash{$1$}}}%
    \put(0.28030807,0.36099567){\color[rgb]{0,0,0}\makebox(0,0)[lb]{\smash{$\ell+\ell'-1$}}}%
    \put(0.36857424,0.32269149){\color[rgb]{0,0,0}\makebox(0,0)[lb]{\smash{$1$}}}%
    \put(0.26032329,0.2469158){\color[rgb]{0,0,0}\makebox(0,0)[lb]{\smash{$j-1$}}}%
    \put(0.4351902,0.3010413){\color[rgb]{0,0,0}\makebox(0,0)[lb]{\smash{$n$}}}%
    \put(0.90863938,0.29604509){\color[rgb]{0,0,0}\makebox(0,0)[lb]{\smash{}}}%
    \put(0.75875341,0.30437207){\color[rgb]{0,0,0}\makebox(0,0)[lb]{\smash{$1$}}}%
    \put(0.69380284,0.33768006){\color[rgb]{0,0,0}\makebox(0,0)[lb]{\smash{$1$}}}%
    \put(0.68547584,0.2410869){\color[rgb]{0,0,0}\makebox(0,0)[lb]{\smash{$n-1$}}}%
    \put(0.83369637,0.2494139){\color[rgb]{0,0,0}\makebox(0,0)[lb]{\smash{$L'_{\sigma}$}}}%
    \put(0.92695872,0.30437207){\color[rgb]{0,0,0}\makebox(0,0)[lb]{\smash{$n$}}}%
    \put(0.78706519,0.36099565){\color[rgb]{0,0,0}\makebox(0,0)[lb]{\smash{$\ell+\ell'-1$}}}%
    \put(0.75209181,0.2460831){\color[rgb]{0,0,0}\makebox(0,0)[lb]{\smash{$j-1$}}}%
    \put(0.84035797,0.29937587){\color[rgb]{0,0,0}\makebox(0,0)[lb]{\smash{$j-k$}}}%
    \put(0.49359958,0.07577567){\color[rgb]{0,0,0}\makebox(0,0)[lb]{\smash{$=(-1)^{n-k}\frac{\triangle_{k-1}}{\triangle_{n-1}}$}}}%
    \put(0.71367007,0.13715753){\color[rgb]{0,0,0}\makebox(0,0)[lb]{\smash{$1$}}}%
    \put(0.80193623,0.12549974){\color[rgb]{0,0,0}\makebox(0,0)[lb]{\smash{$n-k-1$}}}%
    \put(0.8602252,0.08386476){\color[rgb]{0,0,0}\makebox(0,0)[lb]{\smash{$1$}}}%
    \put(0.8602252,0.01225259){\color[rgb]{0,0,0}\makebox(0,0)[lb]{\smash{$L'_{\sigma}$}}}%
    \put(0.71533547,0.00392559){\color[rgb]{0,0,0}\makebox(0,0)[lb]{\smash{$n-1$}}}%
    \put(0.96014915,0.06721076){\color[rgb]{0,0,0}\makebox(0,0)[lb]{\smash{$n$}}}%
    \put(0.48812604,0.31507821){\color[rgb]{0,0,0}\makebox(0,0)[lb]{\smash{$=(-1)^{n-j}\frac{\triangle_{j-1}}{\triangle_{n-1}}$}}}%
    \put(-0.00138236,0.30913035){\color[rgb]{0,0,0}\makebox(0,0)[lb]{\smash{$(-1)^{n-j}\frac{\triangle_{j-1}}{\triangle_{n-1}}$}}}%
    \put(0.06761275,0.47091201){\color[rgb]{0,0,0}\makebox(0,0)[lb]{\smash{}}}%
    \put(0.78028614,0.00892179){\color[rgb]{0,0,0}\makebox(0,0)[lb]{\smash{$k-1$}}}%
    \put(0.85867741,0.29604508){\color[rgb]{0,0,0}\makebox(0,0)[lb]{\smash{}}}%
    \put(0.30076866,0.20444812){\color[rgb]{0,0,0}\makebox(0,0)[lb]{\smash{$\Sk^2_{\sigma}$}}}%
  \end{picture}%
\endgroup%
, 
\end{equation*}
where $L'_{\sigma}$ is the result of applying $\sigma$ to the set of crossings of $L^{n, j}$ inside of $S_k$. Now we estimate $\sgn(\sigma) + 2|\overline{\Sk_{\sigma}^{2'}}|$ relative to $\sgn(+)+2|\overline{\Sk^{2'}_{+}}|$.  The condition that $\sigma$ has to choose the $-$-resolution for a crossing between $S_{i-1}$ and $S_{i}$ for all $i\in \{2, \ldots, k\}$ forces $\deg \left( A^{\sgn(\sigma)} \langle \overline{\Sk^{2'}_{\sigma}} \rangle \right) \leq \deg \left( A^{\sgn(+)} \langle \overline{\Sk_+^{2'}} \rangle \right)- 4(k-1)$ by Lemma \ref{lem:sstates}. This is because each change of resolution from $+$- to $-$- merges $S_i$ and $S_{i+1}$. Now the fact that $|\overline{\Sk^{2'}_+}| = |\overline{\Sk^{1}_+}|-(n-k)$ and $\deg\frac{\triangle_{k-1}}{\triangle_{n-1}} = -2(n-k)$ then gives 
\[ \deg \left( A^{\sgn(\sigma)} \langle \overline{\Sk^{2'}_{\sigma}}\rangle \right) \leq \deg \left( A^{\sgn(+)} \langle \overline{\Sk^1_+}\rangle \right) -4(n-1). \]
Thus \[\Sk \doteq^s_{n-1} \Sk^1.\] At this point either we are dealing with a circle in the $\ell$ circles and we can apply Lemma \ref{lem:avariant} again to the bottom pair of projectors, or, if we are dealing with a circle in a pair of $\ell'$ pairs of circles, we simply have a loop attached to a Jones-Wenzl projector that we can pull off via an $\doteq^s_{n-1}$ equivalence using \eqref{eq:jwloopid} by rewriting 
\begin{align*}\frac{\triangle_{n+1}}{\triangle_{n}} &= -\frac{A^{2(n+2)}-A^{-2(n+2)}}{A^{2(n+1)}-A^{-2(n+1)}} =  -A^2 \frac{1-A^{-4(n+2)}}{1-A^{-4(n+1)}}\\
&=-A^2(1+ A^{-4(n+1)} + \cdots + \text{ terms with powers lower than $-4n$}).  \end{align*}See Figure \ref{fig:gformg1} for an illustration of both of these cases. 

\begin{figure}[H]
\def \svgwidth{.7\columnwidth}
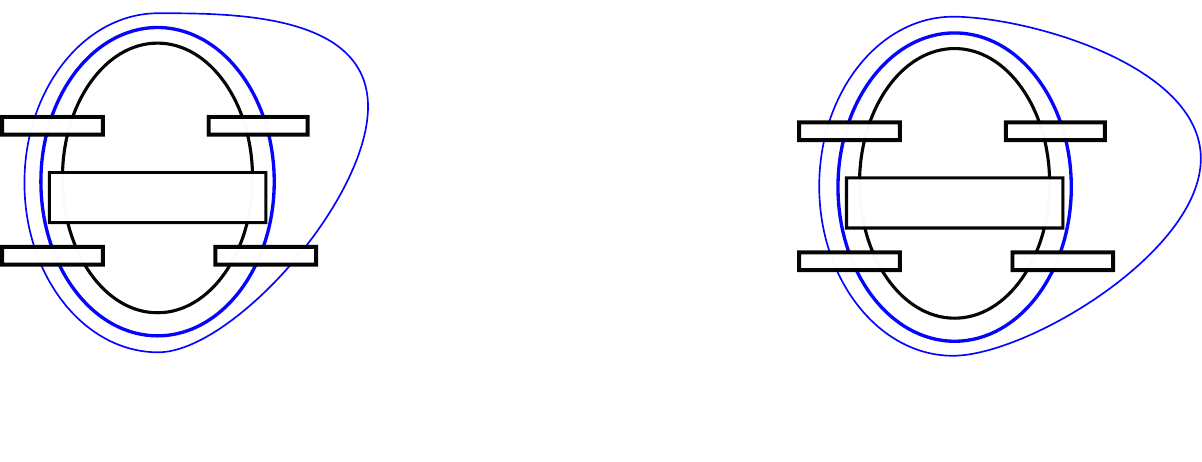
\caption{\label{fig:gformg2}The case when $\ell=0$.}
\end{figure} 

\begin{figure}[H]
\def \svgwidth{.7\columnwidth}
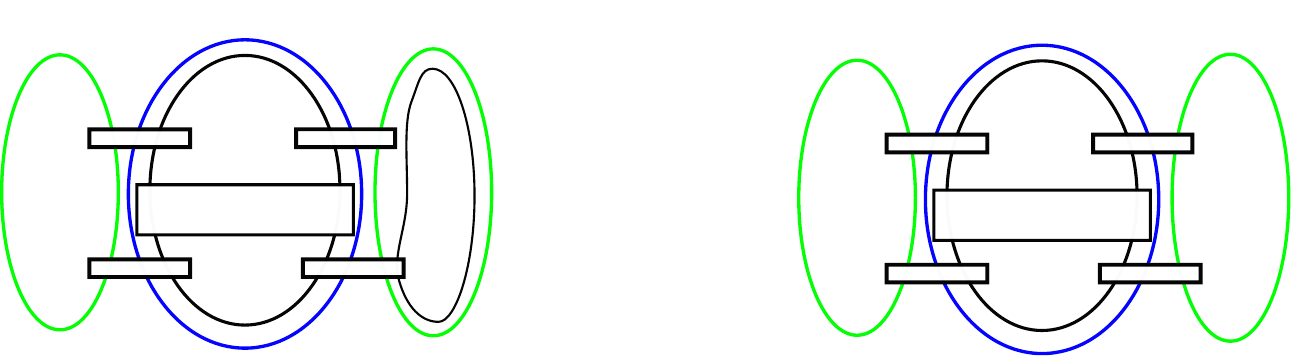
\caption{\label{fig:gformg1}The case when $\ell>0$.}
\end{figure} 

We repeat this process to detach each of the $2\ell' + \ell$ circles, keeping track of the change in the $\doteq$-equivalences when the number of strands through each projector decreases. This finishes the proof of the lemma. 
  
\end{proof}

We now prove Theorem \ref{thm:equiv}. Recall that
\begin{equation} \langle D^n_{\jwproj} \rangle = \sum_{\sigma} A^{\sgn(\sigma)} \langle \mathcal{S}^n_{\sigma} \rangle. \end{equation}
Since $H_n(D) = \deg \left( A^{\sgn(+)} \langle \overline{\Sk^n_+} \rangle \right)$, it suffices to show that 
\[\deg{\left( \sum_{\sigma} A^{\sgn(\sigma)}\langle \Sk^n_{\sigma} \rangle \right)} \leq  \deg \left( A^{\sgn(+)}\langle \overline{\Sk^n_+} \rangle \right) - 4(n-1). \] 

This follows from the technical lemma below. 

\begin{lem} \label{lem:main} Let $D$ be a non $+$-adequate diagram with a crossing $c$ corresponding to a loop inside a state circle in its all-$+$ state graph. 
Let $\Sk^n_{\sigma}$ be a skein obtained by applying a Kauffman state $\sigma$ to the crossings of $D^n_{\vcenter{\hbox{\includegraphics[scale=.1]{jwproj.png}}}}$, which is decorated with four Jones-Wenzl projectors framing $c$. Consider $\sigma$ such that 

\begin{equation} \deg \left( A^{\sgn(\sigma)} \langle \overline{\Sk_{\sigma}^n} \rangle \right) > \deg \left( A^{\sgn(+)} \langle \overline{\Sk^n_+} \rangle\right) - 4(n-1). \label{eq:contributingskein} \end{equation}

We have 
\[\deg{\left( \sum_{\sigma \text{ satisfying } \eqref{eq:contributingskein}} A^{\sgn(\sigma)} \langle \Sk_{\sigma}^n \rangle \right)} \leq \deg \left(  A^{\sgn(+)} \langle \overline{\Sk^n_+} \rangle \right) - 4(n-1). \]
\end{lem}

By disregarding the skeins from states whose Kauffman brackets have maximum degrees which are too low, we have
\begin{align} \langle D^n_{\jwproj} \rangle = \sum_{\sigma} A^{\sgn(\sigma)} \langle S_{\sigma}^n \rangle \doteq_{n-1}^s \left( \sum_{\sigma \text{ satisfying } \eqref{eq:contributingskein}} A^{\sgn(\sigma)} \langle \Sk_{\sigma}^n \rangle \right). 
\end{align}
Applying Lemma \ref{lem:main} then completes the proof of Theorem \ref{thm:equiv}.

\begin{proof}(of Lemma \ref{lem:main}) \\ 
A Kauffman state $\sigma$ satisfying \eqref{eq:contributingskein} chooses the $+$-resolution on all crossings whose corresponding segments lie between $S_{i}$, $S_{i+1}$ in $\overline{\Sk^n_+}$ for some $1\leq i \leq n-1$. To see this, we compare $\sigma$ to the all-$+$ state. If $\sigma$ chooses the $-$-resolution at a crossing whose corresponding segment lies between $S_i$ and $S_{i+1}$ for every $1\leq i \leq n-1$, then 
\[\deg \left( A^{\sgn(\sigma)} \langle \overline{\Sk_{\sigma}^n} \rangle \right) \leq \deg \left(  A^{\sgn(+)}\langle \overline{\Sk^n_+} \rangle \right) - 4(n-1) \] by taking a sequence of states from the all-$+$ state to $\sigma$ where there are $n-1$ pairs of terms in the sequence, each of which merges a pair of circles, and applying Lemma \ref{lem:sstates}.  

For a Kauffman state $\sigma$  on $D^n_{\jwproj}$, let $j_{\sigma}$ be the largest integer in $\{1, \ldots, n-1 \}$ where $\sigma$ chooses the $+$-resolution for all the crossings corresponding to segments between $S_i$ and $S_{i+1}$. In $D^n_{\jwproj}$, the loop crossing $c$ cables to $c^n$ crossings. Let $L_j$ be the subset of crossings of $c^n$ whose corresponding segments in $\Sk^n_{+}$ lie inside $S_j$. We define an equivalence relation on the set of skeins with Kauffman brackets satisfying \eqref{eq:contributingskein}: Two skeins $\Sk^n_{\sigma}$ and $\Sk^n_{\sigma'}$ resulting from applying the Kauffman states $\sigma$ and $\sigma'$ to $D^n_{\jwproj}$, respectively, are equivalent, and we write $\Sk^n_{\sigma} \sim \Sk^n_{\sigma'}$, if and only if 
\begin{enumerate}[(a)]
\item $j_{\sigma}=j_{\sigma'}$.  
\item $\sigma$ and $\sigma'$ are identical outside of $L_{j_{\sigma}}$. 
\end{enumerate}
It is clear that $\sim$ is an equivalence relation. In an equivalence class of $\sigma$, we may decompose $\sigma$ as a disjoint union of Kauffman states $\sigma_1 \sqcup \sigma_2$, where $\sigma_1$ is on the crossings in $L_{j_{\sigma}}$, and $\sigma_2$ is on the crossings not in $L_{j_{\sigma}}$. So we have $\sgn(\sigma) = \sgn(\sigma_1) + \sgn(\sigma_2)$.

We have 
\begin{align} \label{eq:starts}
\sum_{\sigma \text{ satisfying } \eqref{eq:contributingskein}} A^{\sgn(\sigma)} \langle \Sk^{n}_{\sigma} \rangle &= 
\sum_{C \text{ an equivalence class of } \sim} \ \ \sum_{\sigma \in C} A^{\sgn(\sigma)} \langle \Sk^{n}_{\sigma} \rangle. \\ \notag
\intertext{Since $\sgn(\sigma_2)$ is identical across $C$, this is equal to} 
\sum_{\sigma \text{ satisfying } \eqref{eq:contributingskein}} A^{\sgn(\sigma)} \langle \Sk^{n}_{\sigma} \rangle &= \sum_{C \text{ an equivalence class of } \sim} \ \ A^{\sgn(\sigma_2)}\sum_{\sigma \in C} A^{\sgn(\sigma_1)} \langle \Sk^{n}_{\sigma} \rangle. 
\intertext{ Fix an equivalence class $C$ and let $\Sk^{n, C}$ be the skein resulting from applying $\sigma_2$ to the crossings not in $L_{j_{\sigma}}$ for $\sigma \in C$. Then}
\sum_{\sigma \text{ satisfying } \eqref{eq:contributingskein}} A^{\sgn(\sigma)} \langle \Sk^{n}_{\sigma} \rangle&=  \sum_{C \text{ an equivalence class of } \sim} \ \ A^{\sgn(\sigma_2)} \langle \Sk^{n, C}\rangle.
\end{align}

 The component $\Sk$ decorated by projectors in $\Sk^{n, C}$ may be isotoped to have the form of Figure \ref{fig:twocase} with $ n-(\ell+\ell') = j_{\sigma}$ for $\sigma\in C$, since otherwise there is a cap or a cup composed with a projector, resulting in a 0 skein. Let $\ell'$ be the number of pairs of circles, each going through two projectors and $\ell$ be the number of circles through all four projectors outside of $S_{j_{\sigma}}$.  Let $s' = |L_{j_{\sigma}}| + \deg \langle \overline{\Sk^{n, C}_+}\rangle$, where $\Sk_+^{n, C}$ is the skein resulting from applying the all-$+$ state to the crossings of $\Sk^{n, C}$, and let $|\sigma_2|$ be the number of circles disjoint from the component decorated by the projectors in $\Sk^{n, C}$. Now it follows directly from Lemma \ref{lem:localform} that
\begin{equation} \label{eq:doteq} \langle \Sk^{n, C}\rangle  \doteq_{n-1}^{s'} (-A^{-2}-A^2)^{|\sigma_2|} (-A)^{2(\ell+2\ell')} \langle U^{n, j_{\sigma}}\rangle, \end{equation}
where $U^{n,j_{\sigma}}$ is the skein as defined by Definition \ref{defn:unknotwist}.

By Lemma \ref{lem:generic-loop}, 
\[\deg \langle U^{n, j_{\sigma}}  \rangle \leq |L_{j_{\sigma}}| + \deg \langle \overline{U^{n, j_{\sigma}}_+} \rangle -4j_{\sigma}. \] 
This implies 
\begin{equation} \deg \langle \Sk^{n, C} \rangle \leq  s' - 4j_{\sigma}.  \label{eq:degestimatesnc}  \end{equation}  
Take a sequence of states from the all-$+$ state to $\sigma = \sigma_2 \sqcup \sigma_1$. Since $\sigma_2$ chooses the $-$-resolution on a crossing between $S_i$ and $S_{i+1}$ for every $i\in \{j_{\sigma}, \ldots, n-1\}$, we have by applying Lemma \ref{lem:sstates} that 
\[s'+ \sgn(\sigma_2) + 2|\sigma_2|\leq \sgn(+) + \deg \langle \overline{\Sk^n_+} \rangle - 4(n-j_{\sigma}-1).  \]  Taken with \eqref{eq:degestimatesnc}, we get 
\[\deg \left( \sum_{C \text{ an equivalence class of } \sim}  A^{\sgn(\sigma)} \langle \Sk^n_{\sigma} \rangle \right) \leq \deg \left( A^{\sgn(+)} \langle \overline{\Sk^n_+}\rangle \right) - 4(n-1).   \]

\end{proof} 

\section{Detecting semi-adequacy using the colored Jones polynomial} \label{sec:invariants} 
We use Theorem~\ref{thm:tail} to define a link invariant as in \cite{kalee1}. Let $D$ be a diagram of an oriented link $K$. Let $c_-(D)$ be the number of negative crossings of $D$, and recall that $|s_+(D)|$ is the number of state circles in the all $+$-resolution of $D$, $c(D)$ is the number of crossings in $D$, and $\omega(D)$ is the writhe of $D$. We consider the complexity 
\[ (c_-(D), c(D), |s_+(D)| - \omega(D)),\]  
ordered lexicographically. Let $\mathcal{D}(K)$ be the set of diagrams of $K$ which minimizes this complexity. Recall that the lower bound $h_n(D)$ of the minimum degree $d(n)$ of $J_K^n(q)$, defined in \eqref{eq:lowerbound},  is $-\frac{1}{4}H_{n-1}(D) + \frac{(n-1)^2+2(n-1)}{4} \omega(D)$, where 
\[H_n(D) = n^2c(D) + 2n|s_+(D)|. \] 
\begin{defn} Let $K$ be a link and $D$ an oriented link diagram in $\mathcal{D}(K)$. For $i\geq 3$, let
$\beta_i=\beta_i(D)$ be the coefficient of $q^{h_i(D)+i-3}$ in $J_K^i(q)$.
Define
$$J^+_D(q):=\sum_{i=3}^{\infty} \beta_i q^{i}.$$
\end{defn} 
We will need the following lemma from \cite{kalee1}. 
\begin{lem}\label{lem:alladequate} \cite[Lemma 3.4]{kalee1} 
Suppose that for a link $K$, there is a diagram $D\in \mathcal{D}(K)$ that is $+$-adequate. Then, all the diagrams in $\mathcal{D}(K)$ are $+$-adequate. 
\end{lem}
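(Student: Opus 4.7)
The plan is to argue by contradiction. Suppose that some $D_1 \in \mathcal{D}(K)$ is $A$-adequate while some $D_2 \in \mathcal{D}(K)$ is not $A$-adequate. I will derive a numerical contradiction from Theorem~\ref{thm:main}.

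First I would exploit the fact that both diagrams are minimizers of the same lexicographic complexity $(c_-(D), c(D), |s_A(D)| - w(D))$. Coordinate by coordinate this forces $c_-(D_1) = c_-(D_2)$, $c(D_1) = c(D_2)$, and $|s_A(D_1)| - w(D_1) = |s_A(D_2)| - w(D_2)$. Since $w(D) = c(D) - 2c_-(D)$, the first two equalities force $w(D_1) = w(D_2)$, and the third then yields $|s_A(D_1)| = |s_A(D_2)|$. Because $h_n(D)$ is built from $c(D)$, $|s_A(D)|$, and $w(D)$ alone, I conclude $h_n(D_1) = h_n(D_2)$ for every $n \geq 2$.

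Next I would invoke the sharpness of the degree bound for $A$-adequate diagrams: the cable $D_1^{n-1}$ is $A$-adequate whenever $D_1$ is, so $d(n) = h_n(D_1)$ for every $n \geq 2$, as recorded after Definition~\ref{defn:lowerbound}. On the other hand, applying Theorem~\ref{thm:main} to the non-$A$-adequate diagram $D_2$ gives $d(n) \geq h_n(D_2) + (n-2)$ for every $n > 2$. Chaining these together with the equality $h_n(D_1) = h_n(D_2)$ derived above yields $h_n(D_2) \geq h_n(D_2) + (n-2)$ for every $n > 2$, which is absurd already at $n = 3$ and rules out the existence of $D_2$.

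I do not anticipate a genuine obstacle here, since all the substantive content is packaged in Theorem~\ref{thm:main}. The only care required is the purely bookkeeping observation that a common minimizer of the lexicographic triple must agree in $c$, $w$, and $|s_A|$ individually, together with a single numerical chase at $n=3$.
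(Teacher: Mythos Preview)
Your argument is correct. The paper does not actually supply its own proof of this lemma; it is quoted verbatim from \cite[Lemma~3.4]{kalee1}, so there is no in-paper proof to compare against. Your contradiction via matching $h_n$ on two minimizers and then invoking a degree gap for the non-$A$-adequate one is exactly the mechanism one expects, and it is presumably what \cite{kalee1} does as well. One small remark: you invoke the full strength of Theorem~\ref{thm:main}, but the weaker statement already available in \cite{kalee1}---namely $d(n) > h_n(D)$ for $n>2$ when $D$ is not $A$-adequate---is enough, since $h_n(D_1)=d(n)>h_n(D_2)=h_n(D_1)$ is already a contradiction. There is no circularity either way, as the proof of Theorem~\ref{thm:bracket} in Section~\ref{sec:bracket} makes no use of this lemma.
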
 
Applying Theorem ~\ref{thm:tail}, we have the following corollaries. 
\begin{cor} 
$J^+_D(q) \not= 0$ if and only if $D$ is $+$-adequate. 
\end{cor}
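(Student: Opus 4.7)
The plan is to establish the two implications of the biconditional separately; each follows readily from Theorem~\ref{thm:main} together with the degree/coefficient facts collected in Section~\ref{subsec:bracket}.

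\emph{First, suppose $D$ is not $A$-adequate; I will show $J^A_D(q) \equiv 0$.} Theorem~\ref{thm:main} gives $d(n) \geq h_n(D) + n - 2$ for every $n > 2$. Since $q = A^{-4}$, the minimum $q$-degree of $J_K(n, q)$ equals minus one-quarter of the maximum $A$-degree of $G_D(n, A)$; combined with the identity $h_n(D) = -\tfrac{1}{4} M(G_D(n, A))$, Theorem~\ref{thm:main} becomes
\[
d^* G_D(n, A) \;\leq\; M(G_D(n, A)) - 4(n-2) \qquad \text{for every } n > 2.
\]
Substituting $n = i + 2$ with $i \geq 1$, the exponent $M(G_D(i+2, A)) - 4(i-1)$ at which $\beta_i$ is extracted exceeds the bound on $d^* G_D(i+2, A)$ by at least $4$, and hence $\beta_i = 0$ for every $i \geq 1$.

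\emph{Conversely, suppose $D$ is $A$-adequate; I will verify $\beta_1 \neq 0$.} Since $A$-adequacy is preserved by cabling, the $2$-cable $D^2$ is $A$-adequate, so the Lickorish--Thistlethwaite bound is sharp: $d^*\langle D^2 \rangle = M(D^2)$ with a nonzero leading coefficient. The Chebyshev expansion recorded in Section~\ref{subsec:bracket},
\[
\langle S_2(D) \rangle \;=\; \langle D^2 \rangle + \text{terms of strictly lower } A\text{-degree},
\]
gives $d^*\langle S_2(D) \rangle = M(D^2)$ with the same leading coefficient. Substituting into the formula defining $G_D(3, A)$, the writhe factor $A^{-8 w(D)}$ matches exactly the value $M(G_D(3, A)) = M(D^2) - 8 w(D)$, and $\beta_1$ equals a sign times the leading coefficient of $\langle D^2 \rangle$ — in particular, nonzero.

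The only bookkeeping to watch is the $q$-versus-$A$ degree conversion; beyond that, the forward direction hands off entirely to Theorem~\ref{thm:main} and the converse hands off to the classical sharpness of the Lickorish--Thistlethwaite bound on $A$-adequate diagrams, so no substantive obstacle arises.
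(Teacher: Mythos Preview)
Your proof is correct and follows essentially the same approach as the paper. The forward direction is identical (Theorem~\ref{thm:main} forces all $\beta_i$ to vanish), and your converse unpacks the fact $h_3(D)=d(3)$ via the Lickorish--Thistlethwaite sharpness on $D^2$, which is precisely the content of the citation the paper invokes for the same step.
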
 
\begin{proof}
If $D$ is not $+$-adequate, then Theorem ~\ref{thm:tail} says that $d(i) \geq h_{i}(D) + i-2$, so $\beta_i = 0$ for all $i$. This shows the forward direction. For the converse, $\beta_3$ is the coefficient of $q^{h_3(D)}$ in $J_K^3(q)$. If $D$ is $+$-adequate, then $h_3(D)$ is equal to the minimum degree $d(3)$ of $J_K^3(q)$, so $\beta_3 \not=0$, and this shows that $J_D^+(q) \not= 0$. 
\end{proof} 
\begin{cor} The power series $J^+_D(q)$ defined above is independent of the diagram $D \in \mathcal{D}(K)$, thus it is an invariant of $K$, which we denote by $J^+_K(q)$. 
\end{cor}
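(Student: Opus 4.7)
The plan is to split into two cases according to whether some (hence, by Lemma~\ref{lem:alladequate}, every) diagram in $\mathcal{D}(K)$ is $A$-adequate, and in each case verify that the coefficients $\beta_i(D)$ only depend on $K$.

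First, suppose no diagram in $\mathcal{D}(K)$ is $A$-adequate. Then by the preceding corollary, $J^A_D(q)=0$ for every $D\in\mathcal{D}(K)$, so $J^A_D(q)$ is (trivially) independent of the choice of $D\in\mathcal{D}(K)$.

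Next, suppose some $D_0\in\mathcal{D}(K)$ is $A$-adequate. By Lemma~\ref{lem:alladequate}, \emph{every} $D\in\mathcal{D}(K)$ is then $A$-adequate. For any such $D$, recall from Section~\ref{subsec:bracket} that $A$-adequacy of $D$ implies $A$-adequacy of $D^{n-1}$, and hence $h_n(D)=d(n)$ for all $n\ge 2$. Applied with $n=i+2$, this gives $h_{i+2}(D)=d(i+2)$, where $d(i+2)$ is the minimum degree of $J_K(i+2,q)$ and therefore depends only on $K$, not on $D$. Unwinding the substitution $q=A^{-4}$, the coefficient $\beta_i(D)$ of $A^{M(G_D(i+2,A))-4(i-1)}$ in $G_D(i+2,A)$ equals the coefficient of $q^{h_{i+2}(D)+(i-1)}=q^{d(i+2)+(i-1)}$ in $J_K(i+2,q)$. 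Since both the power of $q$ and the polynomial $J_K(i+2,q)$ depend only on $K$, so does $\beta_i$.

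In either case, $J^A_D(q)$ is constant on $\mathcal{D}(K)$, so we may denote its common value by $J^A_K(q)$, an invariant of $K$. The only nontrivial input is Lemma~\ref{lem:alladequate}, which ensures that $A$-adequacy is a property of the complexity class $\mathcal{D}(K)$ rather than of a particular diagram within it; the rest is just the fact that, for $A$-adequate diagrams, $h_n(D)$ already equals the link invariant $d(n)$.
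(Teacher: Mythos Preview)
Your argument is correct and follows the same two-case split as the paper. The difference lies in the $A$-adequate case: the paper invokes the tail stability theorem of Armond \cite{armond} to conclude that $J^A_D(q)$ records diagram-independent stable coefficients, whereas you bypass this entirely by observing that for any $A$-adequate $D\in\mathcal{D}(K)$ one has $h_{i+2}(D)=d(i+2)$, so that $\beta_i(D)$ is simply the coefficient of $q^{d(i+2)+(i-1)}$ in the link invariant $J_K(i+2,q)$. This is more elementary and self-contained, since it uses only the degree equality $h_n(D)=d(n)$ already recalled in Section~\ref{subsec:bracket} rather than the deeper stability result. Your case division (some versus no $A$-adequate diagram in $\mathcal{D}(K)$) is also slightly cleaner than the paper's (whether $K$ itself is $A$-adequate), as it avoids needing the auxiliary fact that an $A$-adequate diagram actually realizes the minimal complexity and hence lies in $\mathcal{D}(K)$.
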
 
\begin{proof} If $K$ is not $+$-adequate, then any diagram in $\mathcal{D}(K)$ is not $+$-adequate. Let $D$ be a diagram in  $\mathcal{D}(K)$, then by Theorem ~\ref{thm:tail}, $J^+_D(q) = 0$. If $K$ is $+$-adequate, then an $+$-adequate diagram of $K$ minimizes the complexity $(c_-(D), c(D), |s_+(D)| - w(D))$, thus it belongs to $\mathcal{D}(K)$,  and all the diagrams in $\mathcal{D}(K)$ are $+$-adequate by Lemma ~\ref{lem:alladequate}. Let $D$ be a diagram in $\mathcal{D}(K)$. As shown in \cite{Arm13},  $J^+_D(q)$ records the stable coefficients of the sequence $\{J_K^n(q)\}_{n=2}^{\infty}$, therefore it is also independent of the diagram $D$. 
\end{proof} 

\begin{defn} Let $D$ be a link diagram. Consider the graph $s_+(D)$ with vertices the state circles, and edges the segments from the all-$+$ Kauffman state of $D$.  We denote by $\chi_+(D)$ the Euler characteristic of $s_+(D)$.  
\end{defn} 
\begin{cor} Suppose $D$ is an $+$-adequate diagram of a link $K$ and $D'$ is another diagram of $K$. Then $D'$ is $+$-adequate if and only if $c_-(D) = c_-(D')$ and $\chi_+(D) = \chi_+(D')$.  
\end{cor}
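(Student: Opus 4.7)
The plan is to express both directions of the biconditional through a single intermediate quantity: the lower bound polynomial $h_n(D)$ of Definition~\ref{defn:lowerbound}, viewed as a function of $n$.

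First I would substitute $M(D^{n-1}) = (n-1)^2 c(D) + 2(n-1)|s_A(D)| - 2$ into the definition of $h_n(D)$, and, writing $k=n-1$, obtain
\[
-4\,h_n(D) \;=\; k^2\bigl(c(D)-w(D)\bigr) \;+\; 2k\bigl(|s_A(D)|-w(D)-1\bigr) \;+\; 2.
\]
Using $c(D)-w(D)=2c_-(D)$ together with $\chi_A(D)=|s_A(D)|-c(D)$ (as $G_A$ has $|s_A(D)|$ vertices and $c(D)$ edges), the coefficient of $k$ becomes $2(\chi_A(D)+2c_-(D)-1)$. Thus the polynomial $n\mapsto h_n(D)$ is determined by, and in turn determines, the pair $(c_-(D),\chi_A(D))$.

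For the forward direction, if $D'$ is also $A$-adequate then sharpness on $A$-adequate diagrams gives $d(n)=h_n(D)=h_n(D')$ for every $n\ge 2$; matching the coefficients of $k^2$ and of $k$ in the identity above forces $c_-(D)=c_-(D')$ and $\chi_A(D)=\chi_A(D')$. For the converse, assume those two equalities; the identity above gives $h_n(D)=h_n(D')$ for all $n\ge 2$, so $d(n)=h_n(D)=h_n(D')$ by $A$-adequacy of $D$. If $D'$ were not $A$-adequate, Theorem~\ref{thm:main} would give $d(n)\ge h_n(D')+(n-2)$ for every $n>2$, contradicting $d(n)=h_n(D')$; hence $D'$ must be $A$-adequate.

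The only substantive input is Theorem~\ref{thm:main}, which rules out non-$A$-adequate diagrams realizing the same $h_n$ as an $A$-adequate one; the rest is an algebraic identification of $(c_-,\chi_A)$ with the coefficients of a quadratic in $n$, so I anticipate no serious obstacle.
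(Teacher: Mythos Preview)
Your proof is correct and follows essentially the same route as the paper: both rewrite $h_n(D)$ as a quadratic in $n-1$ with coefficients determined by $c_-(D)$ and $|s_A(D)|-w(D)$ (equivalently $\chi_A(D)$), use the sharpness $d(n)=h_n(D)$ for $A$-adequate diagrams, and obtain a contradiction from Theorem~\ref{thm:main} for the converse. The only minor difference is in the forward direction: the paper invokes \cite[Theorem~5.13]{lickorish} directly for the invariance of $c_-$ and $|s_A|-w$ on $A$-adequate diagrams, whereas you recover the same invariance by matching the coefficients of the quadratic $h_n(D)=d(n)=h_n(D')$ over all $n$.
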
 
\begin{proof}
If $D'$ is $+$-adequate, then $c_-(D)$ and $|s_+(D)| - w(D)$ are invariants of $K$ \cite[Theorem 5.13]{Lic97}.  Thus, $\chi_+(D) = |s_+(D)| - c(D)= |s_+(D)| - w(D) - 2c_-(D)$ is also an invariant of $K$. For the converse, since $D$ is $+$-adequate, the minimum degree $d(n+1)$ of $J_K^{n+1}(q)$ is equal to $h_{n+1}(D)$ for all $n\geq 1$. We rewrite $h_{n+1}(D)$ here slightly differently: 
\begin{align*}
h_{n+1}(D) &= -\frac{1}{4}(n^2c(D) + 2n|s_+(D)| -\omega(D)(n^2+2n)) \\ 
&=-\frac{1}{4}(2c_-(D)n^2 + 2n(|s_+(D)|-\omega(D)))
\end{align*}
If $D'$ is not $+$-adequate but $c_-(D) = c_-(D')$ and $\chi_+(D) = \chi_+(D')$, then $|s_+(D)| - \omega(D) = |s_+(D')| - \omega(D')$, so $h_{n+1}(D) = h_{n+1}(D')$. Theorem \ref{thm:tail} applied to $D'$ will imply that $d(n+1) < h_{n+1}(D)$ for $n\geq 2$, which is a contradiction.
 \end{proof}

\bibliographystyle{amsalpha}
\bibliography{references}

\end{document}